\newcommand\nthalias[1]{\AddToHook{env/#1/begin}{\crefalias{lemma}{#1}}}
\crefname{section}{Section}{Sections}
\crefname{subsection}{\S}{\S\S}
\crefname{subsubsection}{\S}{\S\S}
\theoremstyle{plain}
\newtheorem{lemma}{Lemma}[section]
\newtheorem{proposition}[lemma]{Proposition}
\newtheorem{corollary}[lemma]{Corollary}
\newtheorem{theorem}[lemma]{Theorem}
\theoremstyle{plain}
\newtheorem{theoremN}{Theorem}
\theoremstyle{plain}
\newtheorem{definition}[lemma]{Definition}
\newtheorem{examples}[lemma]{Examples}
\newtheorem{remark}[lemma]{Remark}
\newtheorem{remarks}[lemma]{Remarks}
\newtheorem{notation}[lemma]{Notation}
\crefname{definition}{definition}{definitions}
\crefname{ex}{example}{examples}
\crefname{exs}{example}{examples}
\crefname{remark}{remark}{remarks}
\crefname{remarks}{remark}{remarks}
\crefname{convention}{convention}{conventions}
\crefname{notation}{notation}{notations}
\crefname{table}{table}{tables}
\crefname{lemma}{lemma}{lemmas}
\crefname{proposition}{proposition}{propositions}
\crefname{propositionN}{proposition}{propositions}
\crefname{corollary}{corollary}{corollaries}
\crefname{corollaryN}{corollary}{corollaries}
\crefname{theorem}{theorem}{theorems}
\crefname{theoremN}{theorem}{theorems}
\crefname{enumi}{}{}
\crefname{assumption}{assumption}{Assumptions}
\crefname{construction}{construction}{Constructions}
\crefname{equation}{}{}
\numberwithin{equation}{section}
\theoremstyle{nonumberplain}
\newtheorem{proof}{Proof}
\newcommand\bG{{\mathbb G}}
\newcommand\bH{{\mathbb H}}
\newcommand\bL{{\mathbb L}}
\newcommand\bM{{\mathbb M}}
\newcommand\bQ{{\mathbb Q}}
\newcommand\cC{{\mathcal C}}
\newcommand\cD{{\mathcal D}}
\newcommand\cF{{\mathcal F}}
\newcommand\cJ{{\mathcal J}}
\newcommand\cS{{\mathcal S}}
\newcommand\cU{{\mathcal U}}
\newcommand\cV{{\mathcal V}}
\DeclareMathOperator{\id}{id}
\newcommand{\cat}[1]{\textsc{#1}}
\title{Monadic functors forgetful of (dis)inhibited actions}
\author{Alexandru Chirvasitu}
\begin{document}

\date{}

\newcommand{\Addresses}{{
  \bigskip
  \footnotesize

  \textsc{Department of Mathematics, University at Buffalo}
  \par\nopagebreak
  \textsc{Buffalo, NY 14260-2900, USA}
  \par\nopagebreak
  \textit{E-mail address}: \texttt{achirvas@buffalo.edu}


}}

\maketitle

\begin{abstract}
  We prove a number of results of the following common flavor: for a category $\mathcal{C}$ of topological or uniform spaces with all manner of other properties of common interest (separation / completeness / compactness axioms), a group (or monoid) $\mathbb{G}$ equipped with various types of topological structure (topologies, uniformities) and the corresponding category $\mathcal{C}^{\mathbb{G}}$ of appropriately compatible $\mathbb{G}$-flows in $\mathcal{C}$, the forgetful functor $\mathcal{C}^{\mathbb{G}}\to \mathcal{C}$ is monadic. In all cases of interest the domain category $\mathcal{C}^{\mathbb{G}}$ is also cocomplete, so that results on adjunction lifts along monadic functors apply to provide equivariant completion and/or compactification functors. This recovers, unifies and generalizes a number of such results in the literature due to de Vries, Mart'yanov and others on existence of equivariant compactifications / completions and cocompleteness of flow categories.
\end{abstract}

\noindent {\em Key words: Tychonoff space; adjoint functor theorem; bounded flow; closed category; cocomplete; compactification; compactly generated; completion; concrete category; enriched category; exponentiable; flow; internal group; jointly continuous; monadic; monoidal category; monoidal functor; quasi-bounded flow; reflective; separately continuous; solution-set condition; split coequalizer; tripleability; uniformity; }

\vspace{.5cm}

\noindent{MSC 2020: 18C15; 18D20; 18A30; 18A40; 54D35; 54E15; 54A20; 22F05}


\section*{Introduction}

The general theme underlying the sequel is that of equivariant topologically-flavored structures: topologies, quasi-topologies, uniformities and the like, and their behavior in the presence of an action by a group (or more generally monoid) $\bG$. We refer to such a structure as a {\it $\bG$-flow} in the relevant category (of topological spaces, etc.: \Cref{def:gflow} makes this precise), with the understanding that the unqualified term does not entail any default continuity assumptions on the map $\bG\times X\to X$ implementing the flow.

Universal compactifications of $\bG$-flows offer part of the motivation. Recall \cite[\S 4.4.4]{dvr_ttg} that for any topological group $\bG$, the inclusion functor
\begin{equation}\label{eq:cpctg2topg}
  \text{continuous compact Hausdorff $\bG$-flows}
  =:
  \tensor*[_\iota]{\cat{Cpct}}{_{T_2}^{\bG}}
  \quad
  \lhook\joinrel\xrightarrow{\quad}
  \quad
  \tensor*[_\iota]{\cat{Top}}{^{\bG}}
  :=
  \text{continuous $\bG$-flows}
\end{equation}
has a left adjoint (\Cref{not:flowtypes} explains the left-hand `$\iota$' subscripts). In other words, the full left-hand subcategory is {\it reflective} \cite[Definition 3.5.2]{brcx_hndbk-1}), associating to $\bG\times X\to X$ the familiar {\it universal $\bG$-equivariant compactification} $\beta_{\bG} X$ of $X$ (\cite[\S 1]{ilm_max-equiv-cpct}, \cite{bh_flows,zbMATH06679171,megr_max-equiv-cpct}, their many references, etc.). The inclusion \Cref{eq:cpctg2topg} and its reflection moreover fit into a richer picture: a $\bG$-action $\bG\times X\xrightarrow{\triangleright}X$ on $X$ equips the latter with a {\it uniform structure} \cite[Definition 7.1]{james_unif_1999} $(X,\cU_{\triangleright})$ defined as the finest among those satisfying the following requirements:
\begin{itemize}[wide]
\item its induced topology on $X$ is coarser than the original one;

\item the uniformity is compatible with the action in the sense that elements of $\bG$ send entourages to entourages;

\item and the action is in addition {\it bounded} with respect to the uniformity, in the sense of \cite[\S 2, p.276]{dvr_ex}:
  \begin{equation}\label{eq:eunif}
    \forall\text{ entourage }V\subseteq Y^2,\quad\exists\text{ nbhd }N\ni 1\in \bG
    \quad\text{with}\quad
    \{(s\triangleright y,y)\ |\ s\in N,\ y\in Y\}\subseteq V. 
  \end{equation}
\end{itemize}


The left adjoint $\beta_{\bG}$ to \Cref{eq:cpctg2topg} then factors through the category $\cat{unif}$ of uniform spaces (with {\it uniformly continuous} maps \cite[Definition 7.7]{james_unif_1999} as morphisms). The embedding
\begin{equation*}
  \cat{Cpct}_{T_2}
  \lhook\joinrel\xrightarrow{\quad}
  \cat{Unif}
\end{equation*}
obtained by equipping every compact Hausdorff space with its unique uniformity \cite[Proposition 8.20]{james_unif_1999} compatible with its topology also has a left adjoint
\begin{equation*}
  \cat{Unif}\ni
  (X,\cU)
  \xmapsto{\quad\beta_{\bullet}\quad}
  \beta_{\cU}X
  \in \cat{Cpct}_{T_2},
\end{equation*}
assigning to a uniform space $(X,\cU)$ its {\it Samuel compactification} $\beta_{\cU}X$ of \cite[Theorem II.32]{isb_unif_1964}. The equivariant compactification $\beta_{\bG}X$ is then nothing but $\beta_{\cU_{\triangleright}}X$, equipped with the natural $\bG$-action the latter inherits from $X$. Thus:

\begin{equation}\label{eq:unifgdiag}
  \begin{tikzpicture}[>=stealth,auto,baseline=(current  bounding  box.center)]
    \path[anchor=base] 
    (0,0) node (l) {$\tensor*[_\iota]{\cat{Top}}{^{\bG}}$}
    +(3,.5) node (ul) {$\tensor[_b]{\cat{Unif}}{^{\bG}}$}
    +(10,0) node (ur) {$\tensor*[_\iota]{\cat{Cpct}}{_{T_2}^{\bG}}$}
    +(5,0) node (dl) {$\cat{Unif}$}
    +(7,-.5) node (dr) {$\cat{Cpct}_{T_2}$}
    ;
    \draw[->] (l) to[bend left=6] node[pos=.5,auto] {$\scriptstyle $} (ul);
    \draw[->] (ul) to[bend left=6] node[pos=.5,auto] {$\scriptstyle $} (dl);
    \draw[->] (ur) to[bend left=6] node[pos=.5,auto] {$\scriptstyle $} (dr);
    \draw[->] (l) to[bend right=6] node[pos=.5,auto,swap] {$\scriptstyle \triangleright\mapsto \cU_{\triangleright}$} (dl);
    \draw[->] (dl) to[bend right=6] node[pos=.2,auto,swap] {$\scriptstyle \beta_{\bullet}$} (dr);
    \draw[->] (l) to[bend right=30] node[pos=.5,auto] {$\scriptstyle \beta_{\bG}$} (ur);
  \end{tikzpicture}
\end{equation}
with forgetful unmarked downward arrows. The symbol $\tensor[_b]{\cat{Unif}}{^{\bG}}$ stands for the category of uniform spaces $(Y,\cU)$ equipped with bounded $\bG$-flows $\bG\times Y\xrightarrow{\triangleright}Y$ (the $\cat{EUnif}^{\bG}$ of \cite[Definition 3.2(2)]{megr_max-equiv-cpct}, etc.). Indeed, to verify that $\beta_{\bG}X\cong \beta_{\cU_{\triangleright}}X$, note that
\begin{itemize}[wide]
\item the action of $\bG$ travels to a continuous one on $X$ topologized with its induced $\cU_{\triangleright}$-topology by the assumed coarseness of that topology;

\item thence also to a continuous action on the Samuel compactification $\beta_{\cU_{\triangleright}}X$ by \cite[Proposition 2.2]{ilm_max-equiv-cpct};
  
\item on the one hand the pullback of the unique \cite[Theorem 36.19]{wil_top} compatible uniformity on $\beta_{\bG}X$ along $X\to \beta_{\bG}X$ has the required properties, so the universality of $\beta_{\cU_{\triangleright}}$ provides an equivariant map $\beta_{\cU_{\triangleright}}X\to \beta_{\bG}X$;

\item and conversely, the universality of $\beta_{\bG}$ ensures the existence of an inverse $\beta_{\bG}X\to \beta_{\cU_{\triangleright}}X$ for the map in the preceding item.
\end{itemize}


Given that the left adjoint to \Cref{eq:cpctg2topg} is a $\bG$-equivariant version of the much more familiar {\it Stone-\v{C}ech compactification} \cite[Example 3.3.9.c]{brcx_hndbk-1}, it seems reasonable to fit such left adjunctions into a broader framework whereby the $\bG$-actions ``come along for the ride''. Formally, the observation is that in all instances discussed above (and more), equivariant and ``absolute'' or ``plain'' compactifications are related through {\it adjunction lifting} \cite[\S 4.5]{brcx_hndbk-2} along {\it monadic} functors \cite[Definition 4.4.1]{brcx_hndbk-2}; we elaborate below, after a brief reminder (\cite[\S\S 4.1, 4.2]{brcx_hndbk-2} or \cite[\S\S 3.1, 3.2]{bw} or \cite[\S\S VI.1-3]{mcl_2e} for the standard theory, \cite[\S II]{dub_kan-enr} for the enriched-category version, and so on).

\begin{itemize}[wide]
\item A {\it monad} (or {\it triple}) on a category $\cC$ is an endofunctor $\cC\xrightarrow{T}\cC$ equipped with natural transformations
  \begin{equation*}
    T\circ T\xrightarrow[\text{associative}]{\mu}T
    \quad\text{and}\quad
    \id\xrightarrow[\text{unital with respect to $\mu$}]{\eta} T.
  \end{equation*}
  In short: a monoid in the monoidal category of endofunctors of $\cC$ with composition for its monoidal structure.

\item An {\it algebra} over a monad $T$ (or {\it $T$-algebra}) is an object $X\in \cC$ equipped with a morphism $TX\to X$ appropriately associative and unital with respect to $\mu$ and $\eta$.

  $T$-algebras form {\it Eilenberg-Moore category} $\cC^T$ of {\it $T$-algebras}, equipped with a functor $\cC^T\xrightarrow{\cat{fgt}} \cC$ forgetting the algebra structure maps $TX\to X$: \cite[Definition 4.1.2]{brcx_hndbk-2} for plain categories, or \cite[\S II.1, preceding Proposition II.1.1]{dub_kan-enr} for the enriched version.

\item A functor $\cC'\to \cC$ is {\it monadic} (or {\it tripleable}) if it fits into a diagram
  \begin{equation*}
    \begin{tikzpicture}[>=stealth,auto,baseline=(current  bounding  box.center)]
      \path[anchor=base] 
      (0,0) node (l) {$\cC'$}
      +(2,-.5) node (d) {$\cC$}
      +(4,0) node (r) {$\cC^T$,}
      ;
      \draw[->] (l) to[bend left=6] node[pos=.5,auto] {$\scriptstyle \simeq$} (r);
      \draw[->] (l) to[bend right=6] node[pos=.5,auto,swap] {$\scriptstyle $} (d);
      \draw[->] (r) to[bend left=6] node[pos=.5,auto] {$\scriptstyle \cat{fgt}$} (d);
    \end{tikzpicture}
  \end{equation*}
  commutative up to natural isomorphism. 
\end{itemize}

The point now is that each square in the commutative functor diagram
\begin{equation}\label{eq:2commsq}
  \begin{tikzpicture}[>=stealth,auto,baseline=(current  bounding  box.center)]
    \path[anchor=base] 
    (0,0) node (lu) {$\tensor*[_\iota]{\cat{Cpct}}{_{T_2}^{\bG}}$}
    +(2,0) node (mu) {$\tensor[_b]{\cat{Unif}}{^{\bG}}$}
    +(4,0) node (ru) {$\tensor*[_\iota]{\cat{Top}}{^{\bG}}$}
    +(1,-1) node (ld) {$\cat{Cpct}_{T_2}$}
    +(3,-1) node (md) {$\cat{Unif}$}
    +(5,-1) node (rd) {$\cat{Top}$}
    ;
    \draw[right hook->] (lu) to[bend left=6] node[pos=.5,auto] {$\scriptstyle $} (mu);
    \draw[->] (mu) to[bend left=6] node[pos=.5,auto] {$\scriptstyle $} (ru);
    \draw[right hook->] (ld) to[bend right=6] node[pos=.5,auto] {$\scriptstyle $} (md);
    \draw[->] (md) to[bend right=6] node[pos=.5,auto] {$\scriptstyle $} (rd);
    \draw[->] (lu) to[bend right=6] node[pos=.5,auto] {$\scriptstyle $} (ld);
    \draw[->] (mu) to[bend left=6] node[pos=.5,auto] {$\scriptstyle $} (md);
    \draw[->] (ru) to[bend left=6] node[pos=.5,auto] {$\scriptstyle $} (rd);
  \end{tikzpicture}
\end{equation}
with forgetful downward arrows, and analogous squares involving ``interpolating'' categories such as $\cat{Top}_{T_2}$ (Hausdorff spaces) and $\cat{Top}_{T_{3\frac 12}}$ (Tychonoff spaces), fits into the framework of the adjunction lifting theorem \cite[Theorem 4.5.6 and Exercise 4.8.5]{brcx_hndbk-2}: the downward arrows are monadic and the top categories have appropriate colimits, and hence the top horizontal functors have left adjoints as soon as the bottom ones do. A heavily abbreviated sampling of \Cref{th:monads,th:allcocompl,cor:liftreflect}, then, reads as follows. 

\begin{theoremN}\label{thn:main}
  \begin{enumerate}[(1),wide]
  \item\label{item:thn:main:monads} For every topological group $\bG$ the forgetful functors $\tensor[_\cdot]{\cC}{^{\bG}}\to \cC$ are all monadic, with $\cdot\in\{\iota,b\}$ as appropriate and $\cC$ ranging over any of the categories
    \begin{itemize}[-,wide]
    \item $\cat{Top}_{\bullet}$ with $\bullet\in \{\text{blank},\ T_2,\ T_{2f}=\text{functionally Hausdorff},\ T_{3\frac 12}\}$;

    \item or $\cat{Unif}_{\bullet}$ with $\bullet\in\left\{\text{blank},\ T_2=\text{Hausdorff},\ (T_2,c)=\text{complete Hausdorff}\right\}$;

    \item or $\cat{Cpct}_{T_2}$. 
    \end{itemize}

  \item\label{item:thn:main:cocompl} The categories $\tensor[_\cdot]{\cC}{^{\bG}}$ of \Cref{item:thn:main:monads} are all also cocomplete

  \item\label{item:thn:main:monlift} Consequently, for any of the reflective inclusion functors $\cC\lhook\joinrel\to \cD$ the corresponding $\tensor[_\cdot]{\cC}{^{\bG}}\lhook\joinrel\to \tensor[_\cdot]{\cD}{^{\bG}}$ is also reflective by monadic left-adjoint lifts. 
  \end{enumerate}
\end{theoremN}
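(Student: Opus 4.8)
The plan is to treat the three assertions as a single chain: the monadicity claim~\Cref{item:thn:main:monads} is the engine, the cocompleteness claim~\Cref{item:thn:main:cocompl} is extracted from it together with cocompleteness of the base categories, and the reflectivity claim~\Cref{item:thn:main:monlift} is then a purely formal consequence of the first two via the adjoint lifting theorem \cite[Theorem 4.5.6 and Exercise 4.8.5]{brcx_hndbk-2}. I would accordingly organize the proof around establishing \Cref{item:thn:main:monads}, spend most of the effort there, and treat the last two assertions as harvest.

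For \Cref{item:thn:main:monads} the natural tool is Beck's monadicity theorem \cite[\S 4.4]{brcx_hndbk-2}: the forgetful functor $U:\tensor[_\cdot]{\cC}{^{\bG}}\to\cC$ is monadic as soon as it has a left adjoint, is conservative, and creates coequalizers of $U$-split pairs. Conservativity and the $U$-split coequalizer condition are the routine half: an equivariant morphism whose underlying $\cC$-arrow is invertible has an equivariant inverse, and the splitting data attached to a $U$-split coequalizer propagate the $\bG$-flow uniquely through the quotient, so that $U$ creates such coequalizers on the nose. The content is the existence of a left adjoint. For $\cC=\cat{Top}$ and its separation variants this is explicit: the free continuous flow on $X$ is $\bG\times X$ with $\bG$ translating the first factor, the adjunction bijection $\Hom_{\bG}(\bG\times X,Y)\cong\Hom_{\cC}(X,UY)$ being restriction to $\{1\}\times X$, and a parallel product-uniformity construction handles the bounded uniform cases. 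The delicate categories are $\cat{Cpct}_{T_2}$ and the complete $\cat{Unif}_{(T_2,c)}$, where for a $\bG$ that is not itself compact, respectively complete, the product $\bG\times X$ leaves the category and no naive free flow is available. Here I would instead produce the left adjoint abstractly through the general adjoint functor theorem \cite[\S 3.3]{brcx_hndbk-1}: the flow category is complete and $U$ preserves limits, so the crux reduces to the solution-set condition, which I would verify using the completeness and well-poweredness of these categories together with a cardinality bound on the candidate free flows. This solution-set verification, where the group action must be reconciled with the compactness/completeness constraints, is the step I expect to be the main obstacle.

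Granting monadicity, \Cref{item:thn:main:cocompl} follows from the standard fact that a monadic functor over a cocomplete base yields a cocomplete Eilenberg--Moore category as soon as the latter has coequalizers, general colimits being assembled from base colimits and those coequalizers \cite[\S 4.3]{brcx_hndbk-2}. Every base category on the list is cocomplete, and the required reflexive coequalizers are supplied by the $U$-split analysis above; the single point demanding care is the interaction of the induced monad with colimits, which is where exponentiability of $\bG$ --- or, in its absence, passage to compactly generated spaces --- enters, ensuring that the monad respects the colimits used to build coproducts.

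Finally, for \Cref{item:thn:main:monlift} I would, for each reflective inclusion $\cC\hookrightarrow\cD$ on the list, assemble the square
\[
\begin{array}{ccc}
\tensor[_\cdot]{\cC}{^{\bG}} & \hookrightarrow & \tensor[_\cdot]{\cD}{^{\bG}}\\
\downarrow & & \downarrow\\
\cC & \hookrightarrow & \cD
\end{array}
\]
with forgetful verticals, which commutes strictly because forgetting the action and then including into $\cD$ coincides with including into $\tensor[_\cdot]{\cD}{^{\bG}}$ and then forgetting. The verticals are monadic by \Cref{item:thn:main:monads}, the upper-left corner is cocomplete --- hence carries the reflexive coequalizers required --- by \Cref{item:thn:main:cocompl}, and the bottom inclusion has a left adjoint by the reflectivity hypothesis; the adjoint lifting theorem then furnishes a left adjoint to the top inclusion. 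Since a full and faithful functor stays full and faithful after restricting to equivariant morphisms, this left adjoint exhibits $\tensor[_\cdot]{\cC}{^{\bG}}\hookrightarrow\tensor[_\cdot]{\cD}{^{\bG}}$ as a reflective subcategory, as claimed.
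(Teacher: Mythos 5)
Your treatment of parts (1) and (3) matches the paper's: monadicity is proved via Beck's theorem, with the left adjoint obtained abstractly from the adjoint functor theorem and a cardinality-based solution-set condition (the paper in fact runs this abstract argument uniformly even for $\cat{Top}$, where $\bG\times-$ would do), isomorphism reflection is immediate, split pairs propagate the flow through the quotient, and reflectivity in part (3) is the adjoint lifting theorem applied to the commuting square of forgetful functors. The completion/compactification cases are handled exactly as you anticipate, via the bound $|Z|\le\exp\exp|D|$ for $D$ dense in Hausdorff $Z$.

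Part (2) is where your argument has a genuine gap. You propose to deduce cocompleteness from Linton's theorem (monadic over a cocomplete base plus reflexive coequalizers upstairs), and you assert that ``the required reflexive coequalizers are supplied by the $U$-split analysis above.'' They are not: a $U$-split (contractible) pair comes with a section $r$ of the coequalizer in $\cC$, and it is precisely this section that lets one factor the induced action $\bG\times Y\to Y$ through $\bG\times X\to X\to Y$ and conclude continuity. A general reflexive pair in $\tensor[_\cdot]{\cC}{^{\bG}}$ carries no such splitting, so this argument does not produce its coequalizer, and the paper explicitly warns (in the remarks following the corollaries) that the forgetful functors do \emph{not} in general preserve reflexive coequalizers --- preservation there is equivalent to preservation of arbitrary coequalizers, which fails already for $\bG=(\bQ,+)$ acting on topological spaces. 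Your subsequent appeal to exponentiability of $\bG$ or to compactly generated spaces cannot rescue this, since the theorem is asserted for \emph{every} topological group. The paper avoids the issue entirely: it proves cocompleteness directly by exhibiting a colimit as an initial object of the (complete) category of cocones via Freyd's initial-object theorem, with the solution set again furnished by a cardinality bound on carrier spaces. To repair your proof you would either adopt that route or give an independent construction of reflexive coequalizers in $\tensor[_\cdot]{\cC}{^{\bG}}$ (e.g.\ again by an adjoint-functor/solution-set argument), neither of which follows from the split-pair computation used for monadicity.
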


Recall \cite[Definition 12.2]{ahs} that {\it cocomplete} categories $\cC$ are those for which all functors $\cD\to \cC$ with small $\cD$ have colimits. Equivalently \cite[Theorem 12.3]{ahs}, $\cC$ admits coequalizers for parallel pairs of arrows and coproducts of arbitrary families of objects. 

Offshoots of this main thread include

\begin{itemize}[wide]
\item a generalization (\Cref{cor:cdgengtychcoco}) of the main result of \cite{marty_cocompl} to the effect that for a Hausdorff topological group $\bG$ the category of {\it $\bG$-Tychonoff} flows (i.e. \cite[p.220]{megr_g-cats} those embedding homeomorphically onto their image in the $\bG$-equivariant compactification) is cocomplete;

\item left adjoints of which the construction $(X,\cU)\mapsto (X,\cU^{\bG})$ of \cite[Lemma 3.8]{megr_max-equiv-cpct}, universally attaching a bounded flow in $\cat{Unif}$ to a {\it quasi-bounded} one \cite[Definition 3.2(4)]{megr_max-equiv-cpct}, is a particular case.   
\end{itemize}

\subsection*{Acknowledgements}

This work is partially supported through NSF grant DMS-2001128. 

I am grateful for helpful comments, questions, and observations from G. Luk\'acs and M. Megrelishvili and last but not least, for the anonymous referee's interest, patience and tenacity. 

\section{Preliminaries}\label{se:prel}

Some commonly-employed notation and terminology:

\begin{itemize}[wide]
\item The hom space of morphisms $X\to Y$ in a category $\cC$ is $\cC(X,Y)$. On the few occasions when they come up, opposite categories carry a `$\circ$' superscript (as in $\cC^{\circ}$). 
  
\item $\cat{Set}$, $\cat{Top}$, $\cat{Cpct}$ and $\cat{Unif}$ denote the categories of sets and topological, compact and {\it uniform} spaces respectively; for the latter we refer the reader to \cite[Chapter 7]{james_unif_1999}, \cite[Chapter II]{bourb_top_en_1}, etc., with more specific citations below, as needed.
  
\item We will often speak of {\it $\cD$-concrete categories} $(\cC,U)$, i.e. \cite[Definition 5.1]{ahs} faithful functors $\cC\xrightarrow{U}\cD$. $\cat{Set}$-concrete categories (the {\it constructs} of \cite[Definition 5.1(2)]{ahs}) are just plain {\it concrete}. 
  
\item {\it Separation axioms} (\cite[\S 13 and \S 35]{wil_top} for topologies and uniformities respectively) occasionally decorate the main category symbols as subscripts: $\cat{Top}_{T_2}$ and $\cat{Unif}_{T_2}$ for {\it Hausdorff} topological and uniform spaces respectively, for instance, $\cat{Top}_{T_{3\frac 12}}$ for {\it Tychonoff} \cite[Definition 14.8]{wil_top} (or Hausdorff {\it completely regular}) spaces, etc. Another example of occasional interest (for instance through its relevance to operator algebras \cite[Definition 2.2]{phil_inv-lim}) is the category $\cat{Top}_{T_{2f}}$ of {\it functionally Hausdorff} spaces \cite[Problem 14G]{wil_top}: those admitting continuous real-valued functions assigning any two distinct points distinct values. 

\item $\cat{Top}_{\kappa}$ is the category of {\it compactly generated} spaces (or {\it $\kappa$-spaces}), i.e. \cite[Definition 43.8]{wil_top} the spaces whose open sets are precisely those whose intersection with every compact subspace is open (equivalently: carrying the {\it final topology} \cite[\S I.2.4, Proposition 6]{bourb_top_en_1} induced by the inclusions of its compact subspaces).
    
  $\cat{Top}_{\kappa}$ is coreflective in $\cat{Top}$ (\cite[\S VII.8, Proposition 2]{mcl_2e} for the Hausdorff version $\cat{Top}_{T_2,\kappa}\subset \cat{Top}_{T_2}$), so in particular (co)complete. By \cite[\S VII.8, Theorem 3]{mcl_2e} (and its non-Hausdorff counterpart) $\cat{Top}_{\kappa}$ and $\cat{Top}_{T_2,\kappa}$ are also {\it Cartesian closed} \cite[\S IV.6]{mcl_2e} for their product $\times^{\kappa}$ (henceforth the {\it $\kappa$-product}) obtained by composing the usual Cartesian product with the coreflection $\cat{Top}_{T_2}\to \cat{Top}_{T_2,\kappa}$: all endofunctors $-\times^{\kappa} X$ are left adjoints. 

\item We also write $\cat{Unif}_{T_2,c}$ for the category of {\it complete} \cite[\S II.3.3, Definition 3]{bourb_top_en_1} Hausdorff uniform spaces (completeness makes sense without separation, but is better behaved categorically in its presence). For equivariant {\it completions} (rather than compactifications) the reader can consult, say, \cite{zbMATH00690930} and its sources. 
 
\item For a {\it monoidal category} \cite[Definition 6.1.1]{brcx_hndbk-2} $(\cC,\otimes,1_{\cC})$ we write $\cat{Gr}(\cC)$ or $\cat{Mon}(\cC)$ for the categories of groups or respectively monoids {\it internal} to it: objects $X\in \cC$ equipped with associative morphisms $X\otimes X\to X$ and units $1_{\cC}\to X$, along also with an inversion $X\xrightarrow{(-)^{-1}}X$ in the case of $\cat{Gr}$, all mutually compatible in the familiar sense (see e.g. \cite[\S III.6]{mcl_2e} for {\it Cartesian} monoidal categories, i.e. those with finite products and $\otimes=\times$).
\end{itemize}

Whenever an object $Y$ in a monoidal category $(\cV,\otimes,1_{\cV})$ is {\it exponentiable} in the sense \cite[Definition 7.1.3]{brcx_hndbk-2} that $\cV\xrightarrow{-\otimes Y}\cV$ is left adjoint to a functor $[Y,-]$, there is a correspondence
\begin{equation*}
  \begin{tikzpicture}[>=stealth,auto,baseline=(current  bounding  box.center)]
    \path[anchor=base] 
    (0,0) node (l) {$\bigg(\text{morphisms }X\otimes Y\to Z\bigg)$}
    +(5,0) node (m) {$\cong$}
    +(10,0) node (r) {$\bigg(\text{morphisms }X \to [Y,Z]\bigg)$}
    ;
    \draw[->] (l) to[bend left=10] node[pos=.5,auto] {$\scriptstyle \text{currying}$} (r);
    \draw[->] (r) to[bend left=10] node[pos=.5,auto] {$\scriptstyle \text{uncurrying}$} (l);
  \end{tikzpicture}
\end{equation*}
(in terminology well familiar to theoretical computer scientists \cite[\S 5.1]{sk_form-synt-sem} and also occasionally in use in category theory \cite[Definition 14]{bs_rosetta}). We frequently (and sometimes tacitly) take this for granted, often for set maps, with $\otimes=\times$ and $[X,Y]=$ functions $X\to Y$. 


\section{Monadic compactification / completion lifts}\label{se:qtop}

The central objects under consideration are {\it flows} in categories.

\begin{definition}\label{def:gflow}
  For a monoid $\bM$ a {\it flow on} an object $X\in \cC$ of a category $\cC$ is a monoid morphism $\bM\to \cC(X,X)$. 
\end{definition}

\begin{remarks}\label{res:postflow}
  \begin{enumerate}[(1),wide]
  \item\label{item:res:postflow:whyflow} The term `flow' is in wide use in the literature (e.g. \cite{zbMATH03595892}), and its advantage over `action' is that it seems somewhat more natural to transport attributes of the underlying space $X$ to the former (rather than the latter): compact (Hausdorff) flows are those for which $X$ is compact (Hausdorff), similarly for Tychonoff spaces/flows, etc.
   
  \item\label{item:res:postflow:allconcrete} The categories of interest in the present section are all concrete, and hence flows can always be interpreted as just plain uncurried set maps $\bG\times X\xrightarrow{\triangleright} X$ (unital and associative, as usual). Even when $\bG$ is topological and $\cC$ is some category of topological spaces, though, it is occasionally convenient to consider flows whose underlying map $\triangleright$ is not necessarily continuous. We allow for this by further qualifying the flow:
    \begin{itemize}[wide]
    \item If $\bG$ is a topological group then a flow on $X$ in a $\cat{Top}$-concrete category is continuous if the map $\bG\times X\xrightarrow{\triangleright} X$ is.
      
    \item Under the same circumstances the flow is only {\it separately} continuous if $s\triangleright x$, $s\in \bG$, $x\in X$ is continuous in each variable if the other is kept fixed, etc.
    \end{itemize}
  \end{enumerate}  
\end{remarks}

\begin{notation}\label{not:flowtypes}
  For monoidal $\cC$ and internal monoids $\bM\in\cat{Mon}(\cC)$ one can also consider the category $\tensor[_\iota]{\cC}{^{\bM}}$ of objects $X\in \cC$ equipped with appropriately unital associative morphisms $\bM\otimes X\to X$ in $\cC$ (the left-hand subscript stands for `internal'). We apply this to subcategories $\cD\subseteq \cC$ as well, writing $\tensor[_\iota]{\cD}{^{\bM}}$ for the internal flows $\bM\otimes X\to X$ with $X\in \cD$ (even when $\bM$ itself is not an object of $\cD$, but only of the larger $\cC$).

  $\tensor[_\iota]{\cat{Top}}{^{\bG}}$, for instance, is the category of continuous flows for topological groups $\bG$. In practice, such internal actions will always also be flows in the sense of \Cref{def:gflow} by (un)currying, so we refer to them as such. Other left-hand decorations occasionally appear, as in \Cref{eq:unifgdiag}.

  For more general families $\cF$ of conditions we might demand flows satisfy we employ the generic symbol $\tensor[_\cF]{\cC}{^{\bM}}$ for the category of flows in $\cC$ meeting those requirements.
\end{notation}

\begin{remark}\label{res:whyactextends}
  


  The boundedness of \Cref{eq:eunif} can be phrased in the spirit of $\tensor[_\iota]{\cat{Unif}}{^{\bG}}$, as the requirement that the action $\bG\times Y\xrightarrow{\triangleright}Y$ be uniformly continuous for appropriate uniform structures on its (co)domain:

  \begin{itemize}[wide]
  \item equip the left-hand $Y$ with the {\it discrete uniformity} \cite[Examples 0.6]{rd_unif-gps}, consisting of all subsets of $Y\times Y$ containing the diagonal;

  \item $\bG$ with its {\it right uniformity} \cite[Lemma-Definition 2.1]{rd_unif-gps}, with entourages consisting of $(s',s)\in \bG^2$ with $s's^{-1}$ close to $1\in \bG$;
    
  \item the product $\bG\times Y$ with the resulting  {\it product uniformity } (\cite[Example 0.20(b)]{rd_unif-gps} of the two;

  \item and the right-hand (codomain) $Y$ with its original uniformity. 
  \end{itemize}
  Indeed, uniform continuity for said structures translates to
  \begin{equation*}
    \forall\text{ entourage }V\subseteq Y^2,\quad\exists\text{ nbhd }N\ni 1\in \bG
    \quad\text{with}\quad
    \{(ss'\triangleright y,s'\triangleright y)\ |\ s\in N,\ s'\in \bG,\ y\in Y\}\subseteq V,
  \end{equation*}
  plainly equivalent to \Cref{eq:eunif} by simply rewriting
  \begin{equation*}
    (ss'\triangleright y,s'\triangleright y)
    =
    (s\triangleright(s'\triangleright y),s'\triangleright y)
    =
    (s\triangleright y',y')
  \end{equation*}
  for $y':=s'\triangleright y$. 
\end{remark}

Constraints one might impose on flows in $\cat{Top}$ or $\cat{Unif}$ or any number of analogues (Hausdorff spaces, etc.) include the following.

\begin{examples}\label{exs:flowconstraints}
  \begin{enumerate}[(1),wide]
  \item\label{item:exs:flowconstraints:top} When $\bM\in \cat{Mon}(\cat{Top})$ and $\cC\subseteq \cat{Top}$ is a subcategory we have the continuous flows therein, making up the category $\tensor[_\iota]{\cC}{^{\bM}}$ of \Cref{not:flowtypes}. 
    
  \item\label{item:exs:flowconstraints:septop} Still assuming $\bM$ topological, there are also the {\it separately} continuous flows mentioned in \Cref{res:postflow}\Cref{item:res:postflow:allconcrete}.    

  \item\label{item:exs:flowconstraints:jointsepmix} It is natural at this stage to regard the two preceding examples as polar extremes along a topological-action-strength axis, with \Cref{item:exs:flowconstraints:top} most and \Cref{item:exs:flowconstraints:septop} least constraining. Mixtures are conceivable: one might consider, for instance, separately continuous actions $\bM\times X\to X$ that are jointly continuous when restricted to a fixed submonoid $\bL\le \bM$. 
    
  \item\label{item:exs:flowconstraints:eunif} Take $\cC=\cat{Unif}$ (or subcategories thereof: $\cat{Unif}_{T_2}$, etc.) and $\bG$ a topological group. We have already recalled in \Cref{eq:eunif} the {\it bounded} (or {\it equiuniform}) flows, constituting the category denoted by $\cat{EUnif}^{\bG}$ in \cite[Definition 3.2(3)]{megr_max-equiv-cpct} and $\tensor[_b]{\cat{Unif}}{^{\bG}}$ in \Cref{eq:unifgdiag}.
    
  \item\label{item:exs:flowconstraints:unif} With $\cC$ and $\bG$ as in \Cref{item:exs:flowconstraints:eunif}, there is the category of {\it $\pi$-uniform} actions (or {\it quasi-bounded} $\bG$-flows in $\cat{Unif}$) of \cite[Definition 3.2(4)]{megr_max-equiv-cpct}, denoted there by $\tensor[]{\cat{Unif}}{^{\bG}}$. The requirement is that for every entourage $W\subseteq X^2$, the action $\bG\times X\to X$ (or rather its Cartesian square) map some
    \begin{equation*}
      \Delta_{N}\times V
      :=
      \left\{(s,s)\ |\ s\in N\right\}
      \times V
      \subseteq
      \bG^2\times X^2
      \cong
      (\bG\times X)^2
    \end{equation*}
    into $W$ for an identity neighborhood $N\ni 1\in \bG$ and an entourage $V\subseteq X^2$ over the entourages of the uniformity on $X$.

  \item As the nomenclature ({\it bounded} vs. {\it quasi-bounded}) suggests, and \cite[Remark 3.3(5) and paragraph preceding Remarks 3.3]{megr_max-equiv-cpct} observe, \Cref{item:exs:flowconstraints:eunif} constrains $\bG$-flows in $\cat{Unif}$ strictly more onerously than \Cref{item:exs:flowconstraints:unif}. In the hybridization spirit of \Cref{item:exs:flowconstraints:jointsepmix}, one could concoct categories of flows quasi-bounded globally and bounded when restricted to subgroups $\bH\le \bG$.
    
  \item\label{item:exs:flowconstraints:internalunif} Mimicking continuous flows in $\cat{Top}$, where the action $\bG\times X\to X$ must be a morphism in that category, appropriate choices of $\cF$ will model as $\tensor[_{\cF}]{\cat{Unif}}{^{\bG}}$ categories of flows for which $\bG\times X\to X$ is uniformly continuous when
    \begin{itemize}[wide]
    \item $X$ is given its original uniformity on both sides;
    \item $\bG$ is given its right or left uniformity, or the bilateral \cite[Definition-Proposition 2.2]{rd_unif-gps} analogue;
    \item and $\bG\times X$ its product uniformity.
    \end{itemize}
    Note, however, that $\bG$ might not be an internal {\it group} in $\cat{Unif}$ for some of these choices: the inversion map interchanges the left and right uniformities, so cannot be uniformly continuous for either of these if the two uniformities do not coincide. 
   
  \item\label{item:exs:flowconstraints:sepunif} Once more as in \Cref{item:exs:flowconstraints:septop}, having equipped $\bM$ with a uniformity, we can recover the category of {\it separately} uniformly continuous flows $\bM\times X\to X$ in $\cat{Unif}$ as a $\tensor[_\cF]{\cC}{^\bM}$.
  \end{enumerate}
\end{examples}

\begin{remark}\label{re:jointsep}
  The distinction drawn in \Cref{exs:flowconstraints}\Cref{item:exs:flowconstraints:septop} between joint and separate continuity does matter in practice. Linear representations $\bG\times E\to E$ of compact Hausdorff topological groups on topological vector spaces, usually \cite[\S 2, p.13]{rob} assumed separately continuous, can easily fail to be jointly continuous (\cite[Example 2.2]{2312.12975v1}, for instance). Joint continuity is, however, automatic \cite[p.VIII.9, Proposition 1]{bourb_int_en_7-9} if the topological vector space $E$ is {\it barreled} \cite[\S 21.2]{k_tvs-1}.
\end{remark}

One can rework much of the above internally to compactly generated spaces. 

\begin{examples}\label{exs:filterstructskr}
  \begin{enumerate}[(1),wide]
  \item\label{item:exs:filterstructskr:ktop} For $\bM\in\cat{Mon}(\cat{Top}_{\kappa})$ there are categories $\tensor[_\iota]{\cC}{^{\bM}}$ of flows continuous for the $\kappa$-product. 
    
    \cite[\S 5.1]{dvr_ttg} writes $\cat{KR}$ for $\cat{Top}_{T_2,\kappa}$ and $\cat{KRGRP}$ for $\cat{Gr}(\cat{Top}_{T_2,\kappa})$. The resulting category of $\bG$-flows discussed here is the $\cat{k-KR}^{\bG}$ of \cite[\S 5.3]{dvr_ttg}.

  \item\label{item:exs:filterstructskr:krsep} As in \Cref{exs:flowconstraints}\Cref{item:exs:flowconstraints:septop}, one can weaken the preceding constraint to separate continuity.

  \item Take $\cC=\cat{Top}_{T_2,\kappa}$ and $\bM\in \cat{Mon}(\cC)$ as in \Cref{exs:filterstructskr}\Cref{item:exs:filterstructskr:ktop}, but {\it strengthen} that constraint (as opposed to weakening it, as \Cref{item:exs:filterstructskr:krsep} does): consider actions $\bM\times X\to X$ jointly continuous for the usual product topology (rather than the finer $\kappa$-product).
    
    Per \Cref{pr:krprodpres} below, this product-structure mixture (ordinary versus $\kappa$) produces a flow category that is less well-behaved for our purposes (monadicity, etc.).
  \end{enumerate}
\end{examples}

\begin{proposition}\label{pr:krprodpres}
  For an internal monoid $\bM\in \cat{Mon}(\cat{Top}_{T_2,\kappa})$ the following conditions are equivalent.
  \begin{enumerate}[(a),wide]
  \item\label{item:pr:krprodpres:lc} $\bM$ is locally compact.

  \item\label{item:pr:krprodpres:mon} The forgetful functor $\tensor*[_c]{\cat{Top}}{^\bM_{T_2,\kappa}}\xrightarrow{U} \cat{Top}_{T_2,\kappa}$ from flows $\bM\times X\to X$ jointly continuous for the {\it Cartesian} product is monadic.

  \item\label{item:pr:krprodpres:radj} $U$ is a right adjoint. 
    
  \item\label{item:pr:krprodpres:cont} $U$ is continuous.

  \item\label{item:pr:krprodpres:prod} $U$ preserves products.

  \item\label{item:pr:krprodpres:2prod} $U$ preserves binary products.

  \item\label{item:pr:krprodpres:spec2prod} $U$ preserves products of the form $(\bM,\text{translation action})\times (X,\text{trivial action})$. 
  \end{enumerate}
\end{proposition}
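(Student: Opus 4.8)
The plan is to close the cycle of implications (a)$\Rightarrow$(b)$\Rightarrow$(c)$\Rightarrow$(d)$\Rightarrow$(e)$\Rightarrow$(f)$\Rightarrow$(g)$\Rightarrow$(a). Its long central stretch is pure formalism and carries no content: a monadic functor is, up to equivalence over the base, an Eilenberg--Moore forgetful functor and hence a right adjoint, giving (b)$\Rightarrow$(c); a right adjoint is continuous and so preserves all limits, in particular all products, giving (c)$\Rightarrow$(d)$\Rightarrow$(e); and products restrict to binary products and then to the special products of (g), giving (e)$\Rightarrow$(f)$\Rightarrow$(g). Thus the only implications with mathematical substance are (a)$\Rightarrow$(b) and (g)$\Rightarrow$(a).

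For (a)$\Rightarrow$(b) I would exploit that a locally compact Hausdorff $\bM$ is exponentiable in $\cat{Top}$, so that the ordinary product $\bM\times Z$ of $\bM$ with any compactly generated $Z$ is again compactly generated; in particular $\bM\times X=\bM\times^\kappa X$ for every $X\in\cat{Top}_{T_2,\kappa}$. Consequently the Cartesian-continuity requirement cutting out $\tensor*[_c]{\cat{Top}}{^\bM_{T_2,\kappa}}$ coincides verbatim with $\kappa$-continuity: this category \emph{is} the category $\tensor*[_\iota]{\cat{Top}}{^{\bM}_{T_2,\kappa}}$ of internal flows for the closed monoidal structure $(\cat{Top}_{T_2,\kappa},\times^\kappa)$, with identical objects, morphisms and forgetful functor. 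That forgetful functor is by construction the one attached to the Eilenberg--Moore category of the monad $\bM\times^\kappa(-)$, whose multiplication and unit are induced by those of the internal monoid $\bM$; it is therefore monadic.

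The substantive direction is (g)$\Rightarrow$(a). First I would unwind preservation of the product $(\bM,\text{translation})\times(X,\text{trivial})$: the comparison isomorphism identifies the underlying space of the product with $\bM\times^\kappa X$, and equivariance of the two projections forces the structure map to be the diagonal action $\theta\colon (m,(n,x))\mapsto(mn,x)$. As the structure map of an object of $\tensor*[_c]{\cat{Top}}{^\bM_{T_2,\kappa}}$, this $\theta$ is jointly continuous for the \emph{Cartesian} product $\bM\times(\bM\times^\kappa X)$. The key move is then to feed in the unit section $s\colon X\to\bM\times^\kappa X$, $x\mapsto(1,x)$, which is continuous because its composites with the two $\kappa$-product projections are $\mathrm{const}_{1}$ and $\id_X$. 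Precomposing $\theta$ with $(m,x)\mapsto(m,s(x))$ then exhibits the set-theoretic identity $\bM\times X\to\bM\times^\kappa X$ as continuous; since the reverse identity is always continuous (the $\kappa$-product refines the ordinary one), the two topologies agree, i.e. $\bM\times X$ is compactly generated for \emph{every} $X\in\cat{Top}_{T_2,\kappa}$.

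The remaining task is to turn this product-stability into local compactness, and this is the step I expect to be the main obstacle. For a Hausdorff (indeed compactly generated) $\bM$ it is a classical fact that carrying compactly generated spaces to compactly generated spaces under $\bM\times(-)$ --- equivalently, preservation of quotient maps by $\bM\times(-)$, equivalently core-compactness of $\bM$ --- is equivalent to local compactness; I would cite Michael's analysis of local compactness versus products of quotient maps and $\kappa$-spaces (or the Day--Kelly/Isbell exponentiability criterion), using Hausdorffness to upgrade core-compactness to genuine local compactness. Should a self-contained argument be wanted in place of the citation, the contrapositive to aim for is that a point of $\bM$ with no compact neighbourhood yields a quotient map $q$ with $\bM\times q$ not a quotient, contradicting the equality $\bM\times X=\bM\times^\kappa X$ just obtained.
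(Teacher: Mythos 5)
Your proposal is correct and follows essentially the same route as the paper: \Cref{item:pr:krprodpres:lc}$\Rightarrow$\Cref{item:pr:krprodpres:mon} via exponentiability of locally compact spaces making the Cartesian and $\kappa$-products agree, the intermediate implications dismissed as formal, and \Cref{item:pr:krprodpres:spec2prod}$\Rightarrow$\Cref{item:pr:krprodpres:lc} by forcing $\bM\times X=\bM\times^{\kappa}X$ and then invoking Michael's theorem. The only cosmetic difference is that you carry out the unit-section argument directly, whereas the paper packages exactly that computation as \Cref{le:oplaxact} (applied through \Cref{exs:le:oplaxact}\Cref{item:exs:le:oplaxact:k}).
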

\begin{proof}
  For \Cref{item:pr:krprodpres:lc} $\Rightarrow$ \Cref{item:pr:krprodpres:mon} recall \cite[Proposition 7.2.9]{brcx_hndbk-2} that plain Cartesian products with locally compact spaces coincide with the corresponding $\kappa$-products. $U$, in that case, will be the forgetful functor associated to the monad $\bM\times -$ (unambiguous product).

  The other downward implications being formal, it remains to settle \Cref{item:pr:krprodpres:spec2prod} $\Rightarrow$ \Cref{item:pr:krprodpres:lc}. Failure of local compactness would imply \cite[Theorem 3.1 and footnote (5)]{zbMATH03279741} the existence of some $X\in \cat{Top}_{T_2,\kappa}$ for which the $\kappa$-product $\bM\times^{\kappa} X$ is strictly finer than the usual product. But then \Cref{exs:le:oplaxact}\Cref{item:exs:le:oplaxact:k} argues via \Cref{le:oplaxact} that the left-hand translation action on $\bM\times^{\kappa} X$ is not continuous on $\bM\times (\bM\times^{\kappa}X)$, negating \Cref{item:pr:krprodpres:spec2prod}. 
\end{proof}

\begin{remark}\label{re:drop.t2}
  The appeals to \cite{zbMATH03279741,brcx_hndbk-2} in the proof of \Cref{pr:krprodpres} both make the Hausdorff property crucial at least to the argument. It would be natural, at this point, to ask to what extent separation can be dropped, and what shape the corresponding (more general) result would take.
\end{remark}

It will be useful to have the monadicity claims made in the Introduction collected together under one heading, with a more or less common argument. Some are certainly in the literature, e.g. \cite[Theorem 3.1.9]{dvr_ttg} or \cite[Theorem 2.3]{dvr-cttg} for $\cC=\cat{Top}$ (which case is simpler than the others because the left adjoint to $\tensor*[_\iota]{\cat{Top}}{^{\bM}}\to \cat{Top}$ is explicitly $\bM\times -$); I have not been able to trace all back to prior work though. 


\begin{theorem}\label{th:monads}
  Let $\bM$ be a monoid and $\cS,\cJ\subseteq 2^{\bM}$ families of subsets thereof. 
  
  The functors $\tensor[_\cF]{\cC}{^{\bM}}\to \cC$ forgetful of actions meeting a constraint $\cF$ are monadic in all of the following cases. 
  \begin{enumerate}[(a),wide]
  \item\label{item:th:monads:top} $\bM$ is a topological monoid, $\cC$ is any of the subcategories $\cat{Top}_{\bullet}$ with
    \begin{equation*}
      \bullet\in \left\{\text{blank},\ T_0,\ T_1,\ T_2,\ T_{2f},\ T_{3\frac 12}\right\}
    \end{equation*}
    or $\cat{Cpct}_{T_2}$, and the actions $\bM\times X\to X$ are required to be separately continuous over $S\times X$, $S\in \cS$ and jointly continuous over $J\times X$, $J\in \cJ$. 
    
  \item\label{item:th:monads:unif} $\bM$ is equipped with a uniformity, $\cC=\cat{Unif}_{\bullet}$ with $\bullet\in\left\{\text{blank},\ T_2,\ (T_2,c)\right\}$, and the actions are again separately (jointly) uniformly continuous over $S\times X$, $S\in \cS$ (respectively $J\times X$, $J\in \cJ$). 

  \item\label{item:th:monads:ktop} $\bM\in\cat{Mon}(\cat{Top}_{\kappa})$, $\cC=\cat{Top}_{\bullet,\kappa}$ with $\bullet\in\{\text{blank},T_2\}$, and the actions are again jointly or separately continuous respectively over
    \begin{equation*}
      \kappa(S)\times X
      \quad\text{and}\quad
      \kappa(J)\times^{\kappa} X
      ,\quad
      S\in \cS,\ J\in \cJ
    \end{equation*}
    where $\kappa$ is the coreflection $\cat{Top}\to \cat{Top}_{\kappa}$ and $\times^{\kappa}$ is the $\kappa$-product of \Cref{se:prel}. 

  \item\label{item:th:monads:mixunif} $\bM$ is a topological group, $\cC$ a category of uniform spaces as in \Cref{item:th:monads:unif} above, and the actions are required to be either bounded (\Cref{exs:flowconstraints}\Cref{item:exs:flowconstraints:eunif}) or quasi-bounded (\Cref{exs:flowconstraints}\Cref{item:exs:flowconstraints:unif}). 
  \end{enumerate}
\end{theorem}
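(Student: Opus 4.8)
The unifying tool across all four families is Beck's monadicity theorem \cite[\S 4.4]{brcx_hndbk-2}: a functor is monadic as soon as it admits a left adjoint and creates coequalizers of those parallel pairs whose image carries a split coequalizer. I would verify the two hypotheses separately, running the four families (a)--(d) in parallel and isolating all case-specific input at a single point. Throughout, write $U\colon\tensor[_\cF]{\cC}{^{\bM}}\to\cC$ for the forgetful functor and note first that $U$ reflects isomorphisms, since the inverse of an equivariant map with invertible underlying $\cC$-morphism is automatically equivariant.

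For the left adjoint I would appeal to the General Adjoint Functor Theorem \cite[Theorem 3.3.3]{brcx_hndbk-1}. Every listed $\cC$ is complete, and $U$ creates limits: a diagram of flows has for its limit the underlying $\cC$-limit with the componentwise action, and that action again satisfies $\cF$ --- for (separate or joint, possibly uniform) continuity because $\bM\times-$ (or $\times^{\kappa}$) commutes with the limit projections, and for (quasi-)boundedness because a basic entourage of a product or a subspace involves only finitely many coordinates, so the finitely many identity neighbourhoods furnished by the relevant factors may be intersected. Hence $\tensor[_\cF]{\cC}{^{\bM}}$ is complete and $U$ continuous; the solution-set condition is the usual cardinality bound, as any $\cC$-morphism $X\to UY$ factors through the subflow of $Y$ generated by the image of $X$, whose underlying set (together with the closure operations forced by the separation/completeness axioms, which stay within a fixed bounding cardinal) has size controlled by $|\bM|$ and $|X|$.

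The heart is the creation of $U$-split coequalizers. Let $f,g\colon(X,\triangleright_X)\rightrightarrows(Y,\triangleright_Y)$ be equivariant with $Uf,Ug$ admitting a split coequalizer $q\colon Y\to Q$ in $\cC$, with splitting $s\colon Q\to Y$ satisfying $qs=\id_Q$. Split coequalizers are \emph{absolute}, hence preserved by every functor; applying $\bM\times-$ (resp.\ $\bM\times^{\kappa}-$) and using that $q$ coequalizes $f,g$ together with their equivariance, the composite $q\circ\triangleright_Y$ coequalizes $\id_\bM\times f$ and $\id_\bM\times g$, so there is a unique $\cC$-morphism $\triangleright_Q\colon\bM\times Q\to Q$ with $\triangleright_Q\circ(\id_\bM\times q)=q\circ\triangleright_Y$. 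Absoluteness makes the monoid axioms for $\triangleright_Q$ descend from those for $\triangleright_Y$, so $(Q,\triangleright_Q)$ is a flow, $q$ is equivariant, this is visibly the only flow on $Q$ rendering $q$ equivariant, and the same computation identifies $q$ as the coequalizer in $\tensor[_\cF]{\cC}{^{\bM}}$ that $U$ reflects. Moreover $Q$ lies in the relevant subcategory: it is a retract of $Y$ via $s$, and each property in (a)--(d) --- $T_0,T_1,T_2,T_{2f},T_{3\frac12}$, compactness, completeness --- passes to retracts, so no separate computation of the coequalizer inside the subcategory is needed (this is exactly what fails for non-absolute coequalizers, and is why only \emph{split} pairs are required).

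The one case-specific verification --- and the step I expect to demand the most care --- is that $\triangleright_Q$ again satisfies $\cF$. Here the clean observation is that, since $\id_\bM\times s$ is a section of $\id_\bM\times q$, one has the pointwise formula $\triangleright_Q=q\circ\triangleright_Y\circ(\id_\bM\times s)$. For the continuity constraints this transports everything at once: restricting to $J\times Q$ (resp.\ $S\times Q$, $\kappa(J)\times^{\kappa}Q$, etc.) yields $q\circ\triangleright_Y\mid_{J\times Y}\circ(\id_J\times s)$, a composite of (uniformly) continuous maps, whence joint (or, factor-by-factor, separate) continuity of $\triangleright_Q$ over the prescribed subsets. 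For the bounded and quasi-bounded cases (d) I would argue on entourages: given an entourage $V_Q$ of $Q$, evaluate the defining inclusion for $\triangleright_Y$ on the entourage $(q\times q)^{-1}(V_Q)$ at the points $s(z)$, using $q(s'\triangleright_Y s(z))=s'\triangleright_Q z$ and $q(s(z))=z$ to recover the inclusion $\{(s'\triangleright_Q z,z)\mid s'\in N,\ z\in Q\}\subseteq V_Q$ for a suitable identity neighbourhood $N$; quasi-boundedness is identical with the diagonal $\Delta_N\times V$ in place of $N$. With $\cF$ transported in each family, Beck's criterion delivers monadicity simultaneously in (a)--(d).
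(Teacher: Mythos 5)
Your proposal is correct and follows essentially the same route as the paper: Beck's (precise) monadicity theorem, with the left adjoint obtained from Freyd's adjoint functor theorem via completeness, continuity of the forgetful functor, and a cardinality-based solution set, and with the induced action on a split coequalizer shown to satisfy $\cF$ by the same factorization $\triangleright_Q=q\circ\triangleright_Y\circ(\id_{\bM}\times s)$ through the splitting. The only cosmetic differences are that you phrase the coequalizer step as ``creation'' rather than ``existence plus preservation'' and add a (harmless, slightly redundant) retract argument for membership of $Q$ in the subcategory.
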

\begin{proof}
  We suppress the left-hand subscript in $\tensor[_\cF]{\cC}{^{\bM}}\to \cC$ to lighten the notation. 
  
  The proof is a standard application of Beck's {\it Precise Tripleability Theorem (PTT)} (\cite[\S 3.3, Theorem 10]{bw} or \cite[Theorem 4.4.4]{brcx_hndbk-2}), whose hypotheses we check in turn (also recalling them in the process).
  
  \begin{enumerate}[(I),wide]
  \item {\bf $\cC^{\bM}\xrightarrow{G} \cC$ is a right adjoint.} As noted above, some cases are simpler than others: for $\cC=\cat{Top}$, for instance, one can simply take the left adjoint of $G$ to be $\bM\times -$ with the left-hand translation action. This move does not apply in general, e.g. for $\cC=\cat{Cpct}_{T_2}$, because $\bM$ is not generally compact Hausdorff. It will thus be cleaner to give a uniform abstract existence argument for the left adjoints $\cC\xrightarrow{F}\cC^{\bM}$ by verifying the conditions of Freyd's {\it Adjoint Functor Theorem} (\cite[Theorem 18.12]{ahs}, \cite[\S V.6]{mcl_2e}).

    First, in all cases, the two categories $\cC^{\bM}$ and $\cC$ are {\it complete} (i.e. \cite[\S V.1]{mcl_2e} have arbitrary small limits) and $G$ is {\it continuous} (meaning \cite[\S V.4]{mcl_2e} that it preserves those limits). It is enough \cite[Theorem 12.3 and Proposition 13.4]{ahs} to check this for products and equalizers; these are defined in all cases set-theoretically via subspace/product topologies and uniformities, the various separation axioms mentioned ($T_2$ and $T_{3\frac 12}$) survive passage to both subspaces and products, and $\bM$-actions simply come along. 
    
    Secondly, the Adjoint Functor Theorem also requires the {\it solution-set condition}: for every object $X\in \cC$ there is a {\it set} (as opposed to a proper class) of morphisms $X\xrightarrow{f_i}G Y_i$ such that every $X\xrightarrow{f}GY$ factors as
    \begin{equation*}
      \begin{tikzpicture}[>=stealth,auto,baseline=(current  bounding  box.center)]
        \path[anchor=base] 
        (0,0) node (l) {$X$}
        +(2,.5) node (u) {$GY_i$}
        +(4,0) node (r) {$GY$.}
        ;
        \draw[->] (l) to[bend left=6] node[pos=.5,auto] {$\scriptstyle f_i$} (u);
        \draw[->] (u) to[bend left=6] node[pos=.5,auto] {$\scriptstyle Gg$} (r);
        \draw[->] (l) to[bend right=6] node[pos=.5,auto,swap] {$\scriptstyle f$} (r);
      \end{tikzpicture}
    \end{equation*}
    This is achievable by taking for the set $\{f_i\}$ all morphisms from $X$ into $\bM$-action carriers of cardinality $\le \kappa$ for some $\kappa$ dependent only on $X$ and $\bM$.

    In those cases where it suffices to factor through $\bM$-invariant subspaces this is obvious: a $\bM$-invariant space generated by the image of a map defined on $X$ has cardinality at most $|\bM|\cdot |X|$. When one has to factor through {\it closed} subspaces, i.e. when a completion process is involved (for $\cC=\cat{Unif}_{T_2,c}$ and $\cat{Cpct}_{T_2}$), recall \cite[\S 2.4]{juh_card} that there is a uniformly-valid bound
    \begin{equation*}
      |Z|\le \exp\exp|D|
      ,\quad
      Z\in\cat{Top}_{T_2}
      ,\quad
      D\subseteq Z\text{ dense}.
    \end{equation*}
    
  \item {\bf $G$ reflects isomorphisms.} This means that a morphism $f$ in the domain $\cC^{\bM}$ of $G$ is an isomorphism provided $Gf$ is one in the codomain $\cC$. The claim is self-evident, as in each case inverses of $\bM$-equivariant maps are again $\bM$-equivariant.
    
  \item {\bf $\cC^{\bM}$ has coequalizers for the pairs $(f,g)$ with $(Gf,Gg)$ contractible and $G$ preserves them.} Recall \cite[\S 3.3, pre Proposition 2]{bw} that a pair $(\varphi_0,\varphi_1)$ of morphisms in a category is {\it contractible (or split)} if it fits into a diagram
    \begin{equation}\label{eq:splitfork}
      \begin{tikzpicture}[>=stealth,auto,baseline=(current  bounding  box.center)]
        \path[anchor=base] 
        (0,0) node (l) {$X'$}
        +(3,0) node (m) {$X$}
        +(5,0) node (r) {$Y$}
        ;
        \draw[->] (l) to[bend left=30] node[pos=.5,auto] {$\scriptstyle \varphi_0$} (m);
        \draw[->] (l) to[bend right=30] node[pos=.5,auto,swap] {$\scriptstyle \varphi_1$} (m);
        \draw[->] (m) to[bend left=10] node[pos=.5,auto] {$\scriptstyle \varphi$} (r);
        \draw[->] (r) to[bend left=10] node[pos=.5,auto] {$\scriptstyle r$} (m);
        \draw[->] (m) to[bend left=0] node[pos=.5,auto,swap] {$\scriptstyle s$} (l);
      \end{tikzpicture}
      ,\quad
      \varphi r=\id
      ,\quad
      \varphi_0 s=\id
      ,\quad
      \varphi_1 s=r\varphi
    \end{equation}
    (whereupon $\varphi$ is automatically \cite[\S 3.3, Proposition 2]{bw} a coequalizer for $(\varphi_0,\varphi_1)$). 

    In all cases $\bM$ acts by $\cC$-isomorphisms, so the action does travel along the coequalizer $\varphi$ of \Cref{eq:splitfork} to give an action $\bM\times Y\to Y$ (with $\varphi_i:=G f_i$, $i=0,1$). The issue is in every case checking that that map is continuous in the appropriate sense (plainly continuous or compatible with the uniformity). This, though, follows from the splitting \Cref{eq:splitfork}: the action in question factors as
    \begin{equation}\label{eq:gy2y}
      \begin{tikzpicture}[>=stealth,auto,baseline=(current  bounding  box.center)]
        \path[anchor=base] 
        (0,0) node (l) {$\bM\times Y$}
        +(2,-.5) node (dl) {$\bM\times X$}
        +(4,-.5) node (dr) {$X$}
        +(6,0) node (r) {$Y$,}
        ;
        \draw[->] (l) to[bend left=6] node[pos=.5,auto] {$\scriptstyle $} (r);
        \draw[->] (l) to[bend right=6] node[pos=.5,auto,swap] {$\scriptstyle \id\times r$} (dl);
        \draw[->] (dl) to[bend right=6] node[pos=.5,auto] {$\scriptstyle $} (dr);
        \draw[->] (dr) to[bend right=6] node[pos=.5,auto,swap] {$\scriptstyle \varphi$} (r);
      \end{tikzpicture}
    \end{equation}
    with the outer bottom arrows morphisms in the desired category $\cC$ and the middle bottom morphism continuous in the requisite sense. 
  \end{enumerate}
\end{proof}


The categories $\cC^{\bM}$ are also presumably well known to be {\it co}complete: see \cite[\S 4.3.3]{dvr_ttg} for $\cC=\cat{Top}$, $\cat{Top}_{T_2}$ and $\cat{Cpct}_{T_2}$ for instance. We record the result in full here, for convenience and uniformity.

\begin{theorem}\label{th:allcocompl}
  The categories $\tensor[_\cF]{\cC}{^{\bM}}\to \cC$ of \Cref{th:monads} are all cocomplete. 
\end{theorem}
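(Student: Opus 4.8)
The plan is to rerun, essentially verbatim, the Adjoint Functor Theorem argument already used to produce the left adjoints in the proof of \Cref{th:monads}, but now applied to \emph{diagonal} functors so as to manufacture all colimits rather than the single left adjoint $F$. Recall from part (I) of that proof that in every case listed in \Cref{th:monads} the category $\tensor[_\cF]{\cC}{^{\bM}}$ is complete, with limits created set-theoretically from those of $\cC$. Fix a small category $\cD$; I will show that the constant-diagram functor $\tensor[_\cF]{\cC}{^{\bM}}\xrightarrow{\ \Delta\ }\left(\tensor[_\cF]{\cC}{^{\bM}}\right)^{\cD}$ has a left adjoint, which is precisely a choice of $\cD$-indexed colimits; letting $\cD$ range over all small categories then gives cocompleteness.

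Now $\Delta$ preserves all small limits (limits in the functor category are computed pointwise, and the constant diagram on a limit is the pointwise limit of constant diagrams), and its domain is complete, so by Freyd's Adjoint Functor Theorem (\cite[Theorem 18.12]{ahs}, \cite[\S V.6]{mcl_2e}) it suffices to verify the solution-set condition: for each diagram $D\colon\cD\to\tensor[_\cF]{\cC}{^{\bM}}$ a \emph{set} of cocones $D\to\Delta A_i$ through which every cocone $D\to\Delta A$ factors. This is supplied by the same cardinality bookkeeping as in \Cref{th:monads}(I). Given a cocone $(D_j\xrightarrow{\eta_j}A)_j$, let $A'\subseteq A$ be the sub-$\bM$-flow generated by $\bigcup_j\eta_j(D_j)$; since $\bM$ acts by $\cC$-isomorphisms, $A'$ is $\bM$-invariant, it again satisfies $\cF$ (restriction of a (separately/jointly) continuous, respectively bounded or quasi-bounded, action to an invariant subspace is of the same type), the cocone factors through it, and $|A'|\le|\bM|\cdot\sum_j|D_j|$. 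In the non-completion cases ($\cat{Top}_{\bullet}$, $\cat{Unif}_{\bullet}$ with $\bullet\neq(T_2,c)$, and their $\kappa$-variants) we may therefore take for the solution set all flows of cardinality $\le|\bM|\cdot\sum_j|D_j|$ carrying a cocone from $D$, of which there is only a set up to isomorphism. When a completion or compactification is forced ($\cC=\cat{Unif}_{T_2,c}$ or $\cat{Cpct}_{T_2}$) replace $A'$ by its closure $\overline{A'}$ inside the complete/compact Hausdorff space $A$: this closure is again $\bM$-invariant, still receives the cocone, lies in $\tensor[_\cF]{\cC}{^{\bM}}$, and obeys the uniform bound $|\overline{A'}|\le\exp\exp|A'|$ of \cite[\S 2.4]{juh_card} already invoked in \Cref{th:monads}(I). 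Bounding the vertex by $\kappa(D):=\exp\exp\!\big(|\bM|\cdot\sum_j|D_j|\big)$ uniformly then yields the required set and completes the verification.

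I expect the only genuinely delicate point to be the one flagged in \Cref{pr:krprodpres}: the forgetful functor $G$ need \emph{not} preserve these colimits, since the underlying object of a colimit in $\tensor[_\cF]{\cC}{^{\bM}}$ is in general not the $\cC$-colimit (the free monad $\bM\times-$ fails to preserve quotients for non–locally-compact $\bM$), so one cannot simply lift colimits along $G$. Routing the whole construction through the Adjoint Functor Theorem sidesteps this entirely, as it asserts existence without ever computing the colimit downstairs; the $\exp\exp$ cardinality bound is exactly what keeps the solution-set condition intact across the passage to completions and compactifications, and is the one place where the argument is more than bookkeeping.
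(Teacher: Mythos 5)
Your proof is correct and is essentially the paper's own argument: the paper applies Freyd's initial-object theorem to the category $\cat{coc}(F)$ of cocones, which is exactly the solution-set-for-$\Delta$ formulation of the Adjoint Functor Theorem you use, and the required weakly initial set is obtained in both cases by the same cardinality bookkeeping imported from \Cref{th:monads} (factor any cocone through the invariant subflow generated by the images, then through its closure with the $\exp\exp$ bound in the completion/compactification cases). Your write-up merely makes explicit the details the paper compresses into ``exactly as in the proof of \Cref{th:monads}.''
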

\begin{proof}
  Consider a small-domain functor $\cD\xrightarrow{F}\tensor[_\cF]{\cC}{^{\bM}}\to \cC$. A colimit for $F$ is nothing but an initial object in the category $\cat{coc}(F)$ of {\it cocones} over $F$ \cite[Definitions 2.6.5 and 2.6.6]{brcx_hndbk-1}. Because $\tensor[_\cF]{\cC}{^{\bM}}\to \cC$ is continuous between categories $\cat{coc}(F)$ is complete as well, with the forgetful functor $\cat{coc}(F)\to \tensor[_\cF]{\cC}{^{\bM}}\to \cC$ assigning a cocone its tip continuous.

  Freyd's initial-object theorem \cite[\S V.6, Theorem 1]{mcl_2e} will thus ensure the existence of such an initial object assuming, once more, a solution-set condition: a {\it set} $\cS$ of objects in $\cat{coc}(F)$ so that every object receives a morphism from some object in $\cS$. Exactly as in the proof of \Cref{th:monads}, though, it is enough to take for $\cS$ all flows whose carrier space $X$ has cardinality bounded by some cardinal dependent only on $F$ and $\bM$. 
\end{proof}

To return to the issue of equivariant compactifications and monadic lifting:

\begin{corollary}\label{cor:liftreflect}
  For any of the reflective inclusion functors $\cC\lhook\joinrel\to \cD$ the corresponding $\tensor[_\cF]{\cC}{^{\bM}}\lhook\joinrel\to \tensor[_\cF]{\cD}{^{\bM}}$ is also reflective. 
\end{corollary}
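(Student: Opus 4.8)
The plan is to recognize the statement as a direct instance of the adjoint lifting theorem \cite[Theorem 4.5.6 and Exercise 4.8.5]{brcx_hndbk-2} already advertised in the Introduction, applied to the commutative square
\begin{equation*}
  \begin{tikzpicture}[>=stealth,auto,baseline=(current  bounding  box.center)]
    \path[anchor=base]
    (0,0) node (lu) {$\tensor[_\cF]{\cC}{^{\bM}}$}
    +(3.5,0) node (ru) {$\tensor[_\cF]{\cD}{^{\bM}}$}
    +(0,-1.3) node (ld) {$\cC$}
    +(3.5,-1.3) node (rd) {$\cD$}
    ;
    \draw[right hook->] (lu) to node[pos=.5,auto] {$\scriptstyle Q$} (ru);
    \draw[->] (lu) to node[pos=.5,auto,swap] {$\scriptstyle V$} (ld);
    \draw[->] (ru) to node[pos=.5,auto] {$\scriptstyle W$} (rd);
    \draw[right hook->] (ld) to node[pos=.5,auto] {$\scriptstyle \iota$} (rd);
  \end{tikzpicture}
\end{equation*}
whose vertical arrows $V$ and $W$ forget the $\bM$-action and whose horizontal arrows are the relevant inclusions. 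The square commutes on the nose: both $\iota\circ V$ and $W\circ Q$ send a flow carried by some $X\in\cC$ to the underlying $\cD$-object of $X$. So the task reduces to checking the three hypotheses of the lifting theorem and then upgrading the resulting adjunction to a genuine reflection.

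First I would record that both $V$ and $W$ are monadic: this is exactly \Cref{th:monads}, since $\cC$ and $\cD$ both range over the categories listed there. Next, the reflectivity of $\cC\lhook\joinrel\to\cD$ is precisely the statement that the bottom edge $\iota$ has a left adjoint. Finally, the lifting theorem constructs the sought left adjoint to $Q$ as a coequalizer of a reflexive pair living in the domain $\tensor[_\cF]{\cC}{^{\bM}}$ of $Q$; such coequalizers are available because $\tensor[_\cF]{\cC}{^{\bM}}$ is cocomplete by \Cref{th:allcocompl}. With these three inputs the theorem immediately yields a left adjoint to $Q$.

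It then remains to observe that $Q$ is a \emph{full} embedding, so that this left adjoint exhibits $\tensor[_\cF]{\cC}{^{\bM}}$ as a \emph{reflective} subcategory of $\tensor[_\cF]{\cD}{^{\bM}}$ rather than merely the source of a left adjoint. Faithfulness is immediate; for fullness I would argue that a morphism of $\tensor[_\cF]{\cD}{^{\bM}}$ between two flows whose carriers lie in $\cC$ is an $\bM$-equivariant $\cD$-morphism between objects of $\cC$, and hence --- because $\cC$ is full in $\cD$ --- is already a $\cC$-morphism and therefore a morphism of $\tensor[_\cF]{\cC}{^{\bM}}$.

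I do not anticipate any genuine analytic difficulty here: all the substantive work (monadicity, cocompleteness) has been front-loaded into \Cref{th:monads,th:allcocompl}, and what remains is bookkeeping. The one point that warrants care is matching the orientation of the lifting theorem correctly --- in particular, noticing that the reflexive coequalizers it consumes are those of the domain $\tensor[_\cF]{\cC}{^{\bM}}$ of $Q$, which is why \Cref{th:allcocompl} (and not cocompleteness of the codomain) is the relevant input.
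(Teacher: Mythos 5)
Your proposal is correct and follows essentially the same route as the paper: both apply the adjoint lifting theorem of \cite[Theorem 4.5.6 and Exercise 4.8.5]{brcx_hndbk-2} to the same commutative square, invoking \Cref{th:monads} for monadicity of the vertical forgetful functors and \Cref{th:allcocompl} for the (reflexive) coequalizers needed in the left-hand corner $\tensor[_\cF]{\cC}{^{\bM}}$. Your only addition is the explicit check that fullness of the embedding upgrades the lifted left adjoint to a reflection, which the paper leaves implicit.
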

\begin{proof}
  As sketched before, in the discussion surrounding \Cref{eq:2commsq}:
  \begin{equation*}
    \begin{tikzpicture}[>=stealth,auto,baseline=(current  bounding  box.center)]
      \path[anchor=base] 
      (0,0) node (l) {$\tensor[_\cF]{\cC}{^{\bM}}$}
      +(2,.5) node (u) {$\tensor[_\cF]{\cD}{^{\bM}}$}
      +(2,-.5) node (d) {$\cC$}
      +(4,0) node (r) {$\cD$}
      ;
      \draw[right hook->] (l) to[bend left=6] node[pos=.5,auto] {$\scriptstyle $} (u);
      \draw[->] (u) to[bend left=6] node[pos=.5,auto] {$\scriptstyle \cat{fgt}$} (r);
      \draw[->] (l) to[bend right=6] node[pos=.5,auto,swap] {$\scriptstyle \cat{fgt}$} (d);
      \draw[right hook->] (d) to[bend right=6] node[pos=.5,auto,swap] {$\scriptstyle $} (r);
    \end{tikzpicture}
  \end{equation*}
  has monadic downward arrows by \Cref{th:monads}, a reflective bottom rightward arrow by assumption and a cocomplete left-hand corner by \Cref{th:allcocompl}. The {\it top} rightward arrow must then also be a right adjoint, by the already-referenced adjunction lifting theorem \cite[Theorem 4.5.6]{brcx_hndbk-2} (which in fact would only have required that $\tensor[_\cF]{\cC}{^{\bM}}$ have coequalizers for {\it reflexive pairs} in the sense of \cite[Exercise 4.8.5]{brcx_hndbk-2}: pairs of morphisms with a common domain and codomain and a common right inverse). 
\end{proof}

As an aside, recall (\cite[p.220]{megr_g-cats}, \cite[\S 4]{MR2166267}) that a $\bG$-flow of a topological group is {\it $\bG$-Tychonoff} if its map to the universal $\bG$-equivariant compactification is a homeomorphism onto its image. Note that the property is one attached to the flow rather than the space: there are, in general, Tychonoff $\bG$-flows that are not $\bG$-Tychonoff; indeed, everything in sight (group and space) can even be metrizable, while the compactification is a singleton (\cite{zbMATH06766536}, building on \cite{zbMATH04086527,megr_g-cats}). For that reason (dependence on the flow rather than its base space), we denote the resulting category by $\left(\tensor*[_\iota]{\cat{Top}}{^{\bG}}\right)_{T_{3\frac 12}}$. The relatively recent \cite{marty_cocompl} proves $\left(\tensor*[_\iota]{\cat{Top}}{^{\bG}}\right)_{T_{3\frac 12}}$ cocomplete for Hausdorff $\bG$ by
\begin{itemize}
\item first constructing coequalizers in the larger category $\tensor*[_\iota]{\cat{Top}}{^{\bG}}_{T_{3\frac 12}}$ \cite[Theorem 1]{marty_cocompl} and then transporting those over to $\left(\tensor*[_\iota]{\cat{Top}}{^{\bG}}\right)_{T_{3\frac 12}}$ \cite[Corollary 2]{marty_cocompl};
\item and also constructing coproducts in that smaller category directly \cite[Theorem 3]{marty_cocompl}. 
\end{itemize}

The cocompleteness result follows from \Cref{th:allcocompl} with no separation constraints on $\bG$, but it might be worth recording the natural intermediate generalization between the two cocompleteness results.

An embedding $\cC\lhook\joinrel\to \cD$ as in \Cref{cor:liftreflect} will single out a special class of objects in the larger category $\tensor[_\cF]{\cD}{^{\bM}}$: the full subcategory
\begin{equation}\label{eq:drelcd}
  \left(\tensor[_\cF]{\cD}{^{\bM}}\right)_{\cC\lhook\joinrel\to \cD}
  \lhook\joinrel\xrightarrow{\quad}
  \tensor[_\cF]{\cD}{^{\bM}}
\end{equation}
on those objects $\bM\times X\to X$ whose reflection $X\to Y$ in $\tensor[_\cF]{\cC}{^{\bM}}$ is an isomorphism (uniform or topological, etc.) onto its image. 

\begin{corollary}\label{cor:cdgengtychcoco}
  For $\cC\lhook\joinrel\to \cD$ as in \Cref{cor:liftreflect} the subcategory \Cref{eq:drelcd} is cocomplete. 
\end{corollary}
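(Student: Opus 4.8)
The plan is to build colimits in the category $\cA:=\left(\tensor[_\cF]{\cD}{^{\bM}}\right)_{\cC\lhook\joinrel\to \cD}$ of \Cref{eq:drelcd} by forming them first in the ambient cocomplete category and then cutting down to an image across the reflection. Write $R\colon\tensor[_\cF]{\cD}{^{\bM}}\to\tensor[_\cF]{\cC}{^{\bM}}$ for the reflector (left adjoint to the inclusion) supplied by \Cref{cor:liftreflect}, with unit $\eta$. Given a small diagram $D\colon\cI\to\cA$ with colimit cocone $\lambda_i\colon D_i\to P$, where $P:=\operatorname{colim}D$ is taken in $\tensor[_\cF]{\cD}{^{\bM}}$ (available by \Cref{th:allcocompl}), I would factor the unit $\eta_P\colon P\to RP$ as a surjection $q\colon P\twoheadrightarrow I$ followed by an embedding $m\colon I\lhook\joinrel\to RP$, taking $I$ to be the set-theoretic image of $\eta_P$ equipped with the subspace structure it inherits from $RP\in\tensor[_\cF]{\cC}{^{\bM}}$. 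Since $I$ is an $\bM$-invariant subspace of $RP$ and the defining constraint $\cF$ (continuity or separate continuity over the prescribed sets, or (quasi-)boundedness) is inherited by invariant subspaces carrying the subspace structure, $I$ is again an object of $\tensor[_\cF]{\cD}{^{\bM}}$, and the composites $\mu_i:=q\lambda_i$ assemble into a cocone under $D$. The candidate for $\operatorname{colim}_{\cA}D$ is $I$.

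The first thing to check is that $I$ actually lies in $\cA$, i.e. that $\eta_I\colon I\to RI$ is an embedding. Here I would factor the embedding $m$ through the unit as $m=\bar m\circ\eta_I$ with $\bar m\colon RI\to RP$, and invoke the elementary left-cancellation property of embeddings valid in all the concrete categories at hand: the structure $\eta_I$ pulls back from $RI$ is coarser than that of $I$ (because $\eta_I$ is a morphism), yet finer than the one $m$ pulls back from $RP$ (which is exactly that of $I$, since $m$ is an embedding factoring through $\eta_I$); the two therefore coincide, so $\eta_I$ is an embedding and $I$ sits inside \Cref{eq:drelcd}. This is precisely the heredity of $\cC$-embeddability under subobjects, but phrased so that no case-by-case separation-axiom bookkeeping is needed.

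For the universal property I would take an arbitrary cocone $(\nu_i\colon D_i\to W)_i$ with $W\in\cA$. The colimit $P$ yields a unique $f\colon P\to W$ with $f\lambda_i=\nu_i$; applying $R$ and using naturality of $\eta$ gives $Rf\circ m\circ q=Rf\circ\eta_P=\eta_W\circ f$. Because $q$ is surjective, the image of $Rf\circ m$ equals that of $\eta_W\circ f$ and hence lands inside $\eta_W(W)\subseteq RW$; as $W\in\cA$ the unit $\eta_W$ is an embedding, so $Rf\circ m$ corestricts to a unique morphism $g\colon I\to W$ with $\eta_W\circ g=Rf\circ m$. Cancelling the monomorphism $\eta_W$ gives $g\mu_i=\nu_i$, and uniqueness is immediate: any $g'$ with $g'\mu_i=\nu_i$ satisfies $g'q\lambda_i=\nu_i=f\lambda_i$, whence $g'q=f=gq$ by the colimit property of $P$, and then $g'=g$ since $q$ is epic. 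Thus $I$ with the cocone $(\mu_i)$ is the colimit of $D$ in $\cA$, and since $D$ was an arbitrary small diagram, $\cA$ is cocomplete.

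The one genuinely load-bearing step is the interplay in the third paragraph between the image factorization and the reflection: it is the containment of the image of $Rf\circ m$ inside $\eta_W(W)$ that lets a competing cocone's comparison map descend to $I$, and this works exactly because $W$ is $\cC$-embeddable. The remaining ingredients are routine verifications of the same flavor as those already carried out for \Cref{th:monads,th:allcocompl}: the existence and (surjection, embedding) shape of the image factorization in these concrete categories, the $\bM$-invariance of $I$ together with continuity of the restricted action, and the heredity of the constraint $\cF$ along embeddings. Specializing $\cC\lhook\joinrel\to\cD$ to $\tensor*[_\iota]{\cat{Cpct}}{_{T_2}^{\bG}}\lhook\joinrel\to\tensor*[_\iota]{\cat{Top}}{^{\bG}}$ recovers the cocompleteness of the $\bG$-Tychonoff flows of \cite{marty_cocompl}, with the coequalizer-then-image construction there appearing as the instance of the above in which $D$ is a parallel pair.
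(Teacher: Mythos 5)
Your argument is correct and is essentially the paper's own proof, unpacked: the paper simply observes that \Cref{eq:drelcd} is full reflective in the cocomplete $\tensor[_\cF]{\cD}{^{\bM}}$, with the reflection of an object given by its image through the unit of the reflection into $\tensor[_\cF]{\cC}{^{\bM}}$, and your $I=\operatorname{im}(\eta_P)$ together with the verification of its universal property is precisely that reflector applied to the ambient colimit. The only difference is one of packaging — you inline the standard fact that reflective subcategories of cocomplete categories are cocomplete rather than citing it.
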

\begin{proof}
  An immediate consequence of \Cref{th:allcocompl} and \Cref{cor:liftreflect}, since \Cref{eq:drelcd} is full reflective: the reflection of an object is its image through the reflection in $\tensor[_\cF]{\cC}{^{\bM}}$.
\end{proof}

And returning to the motivating instance:

\begin{corollary}\label{cor:gtychcoco}
  The category $\left(\tensor*[_\iota]{\cat{Top}}{^{\bG}}\right)_{T_{3\frac 12}}$ of continuous $\bG$-Tychonoff flows of a topological group is cocomplete. 
\end{corollary}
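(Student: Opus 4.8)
The plan is to recognize this as a direct specialization of \Cref{cor:cdgengtychcoco}, instantiated at $\cD=\cat{Top}$, $\cC=\cat{Cpct}_{T_2}$, and the continuous-flow constraint $\cF=\iota$ (that is, $\cS=\emptyset$ and $\cJ=\{\bM\}=\{\bG\}$ in the language of \Cref{th:monads}\Cref{item:th:monads:top}). Both $\cat{Cpct}_{T_2}$ and $\cat{Top}$ appear among the categories whose flow forgetful functors \Cref{th:monads} shows monadic, so the genuine content is only to check that the inclusion $\cat{Cpct}_{T_2}\lhook\joinrel\to\cat{Top}$ is reflective and that the resulting instance of \Cref{eq:drelcd} is exactly the category of $\bG$-Tychonoff flows.

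For reflectivity I would invoke the Stone-\v{C}ech compactification as the left adjoint to $\cat{Cpct}_{T_2}\lhook\joinrel\to\cat{Top}$; for a general (not necessarily Tychonoff) space this is the Stone-\v{C}ech compactification of its complete regularization, so the unit $X\to\beta X$ need be neither injective nor an embedding, but it is universal among maps into compact Hausdorff spaces, which is all \Cref{cor:liftreflect} requires. With this in hand \Cref{cor:liftreflect} lifts the reflection to $\tensor*[_\iota]{\cat{Top}}{^{\bG}}\to\tensor*[_\iota]{\cat{Cpct}}{_{T_2}^{\bG}}$, and the identification---already laid out in the introduction around \Cref{eq:cpctg2topg}---of this lifted reflection with the universal $\bG$-equivariant compactification $\beta_{\bG}$ is the linchpin.

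Granting that identification, the defining condition of the subcategory \Cref{eq:drelcd}---that the reflection unit be a topological isomorphism onto its image---reads precisely as the requirement that $X\to\beta_{\bG}X$ be a homeomorphism onto its image, which is the very definition of $\bG$-Tychonoff recalled just before the statement. Hence \Cref{eq:drelcd} for this choice of $\cC\lhook\joinrel\to\cD$ coincides with $\left(\tensor*[_\iota]{\cat{Top}}{^{\bG}}\right)_{T_{3\frac 12}}$, and \Cref{cor:cdgengtychcoco} delivers cocompleteness with no separation hypothesis on $\bG$.

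I expect the main obstacle to be the bookkeeping verification that the abstractly lifted reflection of \Cref{cor:liftreflect} really does compute $\beta_{\bG}$, rather than some a priori different equivariant compactification: one must confirm that applying the adjunction-lifting machinery to the plain Stone-\v{C}ech reflection reproduces the classical universal equivariant compactification. This is exactly the coherence already argued in the introduction (the commutativity of the squares in \Cref{eq:2commsq} together with the factorization of $\beta_{\bG}$ through $\cat{Unif}$), so it amounts to citing that discussion rather than reproving it; once the two reflections are matched, nothing further is needed.
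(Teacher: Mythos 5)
Your proposal is correct and follows essentially the same route as the paper: recognize the statement as the instance of \Cref{cor:cdgengtychcoco} with $\cF=\iota$ and $\cC=\cat{Cpct}_{T_2}$, the only (immaterial) difference being that you take $\cD=\cat{Top}$ where the paper takes $\cD=\cat{Top}_{T_{3\frac12}}$ --- these yield the same full subcategory \Cref{eq:drelcd}, since any flow embedding into its equivariant compactification is automatically Tychonoff. Your worry about matching the abstractly lifted reflection with $\beta_{\bG}$ dissolves by uniqueness of left adjoints: both are left adjoint to the same inclusion \Cref{eq:cpctg2topg}, so no appeal to the factorization through $\cat{Unif}$ is needed.
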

\begin{proof}
  This is indeed a particular case of \Cref{cor:cdgengtychcoco}:
  \begin{equation*}
    \left(\tensor*[_\iota]{\cat{Top}}{^{\bG}}\right)_{T_{3\frac 12}}
    \quad
    \subseteq
    \quad
    \tensor*[_{\iota}]{\cat{Top}}{_{T_{3\frac 12}}^{\bG}}
  \end{equation*}
  is precisely \Cref{eq:drelcd} with $\cF:=\iota$, $\cC:=\cat{Cpct}_{T_2}$ and $\cD:=\cat{Top}_{T_{3\frac 12}}$. 
\end{proof}

\begin{remarks}\label{res:coeqpres}
  \begin{enumerate}[(1),wide]
  \item\label{item:res:coeqpres:notcocont} The forgetful functors $\tensor[_\cF]{\cC}{^{\bM}}\to \cC$ of \Cref{th:monads} are more rarely {\it co}continuous:
    
    \begin{itemize}[wide]
    \item For $\cC=\cat{Top}$, for instance, \cite[Theorem 3.4.3]{dvr_ttg} shows that colimits are preserved when the topological group $\bG$ is locally compact Hausdorff, but coequalizers are not preserved generally by \cite[\S 3.4.4]{dvr_ttg}.
      
      The crucial property of $\bG$ in the aforementioned \cite[Theorem 3.4.3]{dvr_ttg} is in fact its exponentiability (i.e. the requirement that $\bG\times -$ be a left adjoint on $\cat{Top}$, as recalled in \Cref{se:prel}). Indeed, this will ensure coequalizer preservation and in fact cocontinuity (the conditions are in fact equivalent: \Cref{pr:gexpntntbl}): $T$ is precisely the monad attached to the monadic functor $\tensor*[_\iota]{\cat{Top}}{^{\bG}}\to \cat{Top}$, and it is a formal exercise to show that in general, given
      \begin{itemize}
      \item a small category $\cD$;
      \item a category $\cC$ admitting $\cD$-shaped colimits $\varinjlim F$, $\cD\xrightarrow{F}\cC$;
      \item and a monad $\cC\xrightarrow{T}\cC$,
      \end{itemize}
      $\cD$-shaped colimits exist in the Eilenberg-Moore category $\cC^T$ and are preserved by the forgetful functor $\cC^{T}\xrightarrow{G} \cC$ if and only if they are preserved by $T$.
      
      The implication ($\Leftarrow$) (also noted in passing in \cite[proof of Lemma 5.5]{zbMATH06011381}) is \cite[Proposition 4.3.2]{brcx_hndbk-2}. Conversely, recall that $T$ can be recovered as
      \begin{equation*}
        T=G\circ\left(\text{left adjoint of G}\right).
      \end{equation*}
      Said left adjoint of course preserves arbitrary colimits, so any colimits preserved by $G$ are preserved by $T$ also. 

      Locally compact spaces (separated or not) are exponentiable \cite[Proposition 7.1.5]{brcx_hndbk-2}, so \cite[Theorem 3.4.3]{dvr_ttg} in fact goes through for possibly non-$T_2$ locally compact groups. See also \cite[Theorem II-4.12]{ghklms_cont-lat} for alternative characterizations of exponentiable spaces. Exponentiability {\it is} equivalent to local compactness assuming $T_2$ (or more generally \cite[Theorem V-5.6]{ghklms_cont-lat}, for {\it sober} spaces, i.e. \cite[Definition O-5.6]{ghklms_cont-lat} those for which irreducible closed sets are closures of unique singletons).
      
    \item For $\cC=\cat{Cpct}_{T_2}$ and locally compact Hausdorff $\bG$ the preservation of coproducts by $\tensor[_\iota]{\cC}{^{\bG}}\to \cC$ is equivalent to the {\it discreteness} of $\bG$ \cite[Theorem 3.1]{chirvasitu2023quantum}. 
            
    \end{itemize}
    
  \item\label{item:res:coeqpres:noctt} Item \Cref{item:res:coeqpres:notcocont} above also shows that in proving monadicity, one could not employ some of the ``coarser'' versions of Beck's theorem. The {\it Reflexive Tripleability Theorem (RTT)} of \cite[Proposition 5.5.8]{rhl_ct-ctxt}, for instance, would require the preservation by $\tensor[_\cF]{\cC}{^{\bM}}\to \cC$ of {\it reflexive} coequalizers, i.e. \cite[\S 3.3, p.108]{bw} coequalizers of pairs $(f,g)$ of parallel morphisms that have a common right inverse. In all cases under consideration, though, that would amount to preservation of {\it arbitrary} coequalizers (which we know does not obtain universally): because coproducts are, space-wise, simply disjoint unions, an arbitrary parallel pair
    \begin{equation*}
      \begin{tikzpicture}[>=stealth,auto,baseline=(current  bounding  box.center)]
        \path[anchor=base] 
        (0,0) node (l) {$X$}
        +(4,0) node (r) {$Y$}
        ;
        \draw[->] (l) to[bend left=6] node[pos=.5,auto] {$\scriptstyle f$} (r);
        \draw[->] (l) to[bend right=6] node[pos=.5,auto,swap] {$\scriptstyle g$} (r);
      \end{tikzpicture}
    \end{equation*}
    expands into a reflexive pair
    \begin{equation*}
      \begin{tikzpicture}[>=stealth,auto,baseline=(current  bounding  box.center)]
        \path[anchor=base] 
        (0,0) node (l) {$X\coprod Y$}
        +(4,0) node (r) {$Y$}
        ;
        \draw[->] (l) to[bend left=20] node[pos=.5,auto] {$\scriptstyle (f,\id_Y)$} (r);
        \draw[->] (l) to[bend right=20] node[pos=.5,auto,swap] {$\scriptstyle (g,\id_Y)$} (r);
        \draw[left hook->] (r) to[bend right=0] node[pos=.5,auto,swap] {$\scriptstyle $} (l);
      \end{tikzpicture}
    \end{equation*}
    with the same coequalizer. 
  \end{enumerate}
\end{remarks}

Incidentally, the argument in \cite[\S 3.4.4]{dvr_ttg} showing (via \cite[Example 1.5.11]{dvr_ttg}) that $\cat{Top}^{(\bQ,+)}\to \cat{Top}$ fails to preserve coequalizers can be amplified into a {\it characterization} of those groups for which such pathologies do not obtain. See also \cite[\S\S 6.3.6-7]{dvr_ttg} for explicit mention and discussion of the comonadicity of $\tensor*[_\iota]{\cat{Top}}{^{\bG}}\to \cat{Top}$ for locally compact Hausdorff $\bG$. 

\begin{proposition}\label{pr:gexpntntbl}
  For a topological group $\bG$, the following conditions are equivalent.
  \begin{enumerate}[(a),wide]
  \item\label{item:pr:gexpntntbl:a} $\bG$ is exponentiable as a topological space, i.e. $\cat{Top}\xrightarrow{\bG\times -}\cat{Top}$ is a left adjoint.
  \item\label{item:pr:gexpntntbl:b} $\bG\times -$ is cocontinuous.
  \item\label{item:pr:gexpntntbl:c} $\bG\times -$ preserves $\cat{Top}$-coequalizers.
  \item\label{item:pr:gexpntntbl:d} $\bG\times -$ preserves quotients in $\cat{Top}$ by equivalence relations.
  \item\label{item:pr:gexpntntbl:comon} $\tensor*[_\iota]{\cat{Top}}{^{\bG}}\xrightarrow{G} \cat{Top}$ is comonadic.
  \item\label{item:pr:gexpntntbl:ladj} $\tensor*[_\iota]{\cat{Top}}{^{\bG}}\xrightarrow{G} \cat{Top}$ is a left adjoint.
  \item\label{item:pr:gexpntntbl:cocont} $\tensor*[_\iota]{\cat{Top}}{^{\bG}}\xrightarrow{G} \cat{Top}$ is cocontinuous.
  \item\label{item:pr:gexpntntbl:coeq} $\tensor*[_\iota]{\cat{Top}}{^{\bG}}\xrightarrow{G} \cat{Top}$ preserves coequalizers.
  \end{enumerate}  
\end{proposition}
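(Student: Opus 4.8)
The plan is to funnel all eight conditions through cocontinuity of the monad $T:=\bG\times-$ that \Cref{th:monads} attaches to the monadic forgetful functor $G$ of \Cref{item:pr:gexpntntbl:comon} (here $T=GF$, with $F$ the left adjoint sending $X$ to $\bG\times X$ under left translation). I would first record two soft facts about $\cat{Top}$ that do most of the bookkeeping. Binary products distribute over coproducts, so $\bG\times-$ preserves \emph{all} coproducts regardless of $\bG$; and the coequalizers in $\cat{Top}$ are precisely the quotient maps, each such map $q\colon X\rightarrowdbl X/R$ being the quotient by its kernel-pair equivalence relation $R$. Consequently, for the functor $\bG\times-$, preserving coequalizers and preserving equivalence-relation quotients are one and the same requirement.

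Within the first cluster this already gives a lot. The chain \Cref{item:pr:gexpntntbl:a}$\Rightarrow$\Cref{item:pr:gexpntntbl:b}$\Rightarrow$\Cref{item:pr:gexpntntbl:c}$\Rightarrow$\Cref{item:pr:gexpntntbl:d} is immediate (a left adjoint is cocontinuous, and the class of colimits to be preserved only shrinks). For the return \Cref{item:pr:gexpntntbl:d}$\Rightarrow$\Cref{item:pr:gexpntntbl:b}, the equality of the two quotient conditions upgrades \Cref{item:pr:gexpntntbl:d} to preservation of every coequalizer, whence---together with the automatic preservation of coproducts and the construction of arbitrary colimits from coproducts and coequalizers---$\bG\times-$ preserves all colimits. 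Thus \Cref{item:pr:gexpntntbl:b}, \Cref{item:pr:gexpntntbl:c}, \Cref{item:pr:gexpntntbl:d} are mutually equivalent. The bridge to the second cluster is the monad principle recorded in \Cref{res:coeqpres}\Cref{item:res:coeqpres:notcocont}: as $\tensor*[_\iota]{\cat{Top}}{^{\bG}}$ is cocomplete (\Cref{th:allcocompl}) with monad $T=\bG\times-$, the functor $G$ preserves $\cD$-shaped colimits exactly when $T$ does. Taking $\cD$ over all small shapes yields \Cref{item:pr:gexpntntbl:cocont}$\Leftrightarrow$\Cref{item:pr:gexpntntbl:b}, and taking $\cD$ a parallel pair yields \Cref{item:pr:gexpntntbl:coeq}$\Leftrightarrow$\Cref{item:pr:gexpntntbl:c}.

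It remains to splice in \Cref{item:pr:gexpntntbl:a}, \Cref{item:pr:gexpntntbl:comon} and \Cref{item:pr:gexpntntbl:ladj}. For \Cref{item:pr:gexpntntbl:a}$\Rightarrow$\Cref{item:pr:gexpntntbl:comon} I would invoke the classical correspondence between adjoint (co)monads: if the functor $T=\bG\times-$ underlying the monad is a left adjoint, its right adjoint $[\bG,-]$ inherits a comonad structure and $T$-algebras coincide, over $\cat{Top}$, with $[\bG,-]$-coalgebras, so $G$ is comonadic. Then \Cref{item:pr:gexpntntbl:comon}$\Rightarrow$\Cref{item:pr:gexpntntbl:ladj}$\Rightarrow$\Cref{item:pr:gexpntntbl:cocont} are purely formal (a comonadic functor is the forgetful functor from coalgebras, hence a left adjoint, hence cocontinuous). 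Finally \Cref{item:pr:gexpntntbl:b}$\Rightarrow$\Cref{item:pr:gexpntntbl:a} is the one genuinely topological input: I would derive a right adjoint to the cocontinuous $\bG\times-$ either from the totality of $\cat{Top}$ (a topological category over $\cat{Set}$, so that colimit-preserving functors into locally small targets are left adjoints) or, equivalently, from the Day--Kelly/core-compactness characterization of exponentiable spaces---which in fact identifies \Cref{item:pr:gexpntntbl:a} with \Cref{item:pr:gexpntntbl:d} outright (cf. \cite[\S\S II-4, V-5]{ghklms_cont-lat}). Stringing these together, \Cref{item:pr:gexpntntbl:a}$\Rightarrow$\Cref{item:pr:gexpntntbl:comon}$\Rightarrow$\Cref{item:pr:gexpntntbl:ladj}$\Rightarrow$\Cref{item:pr:gexpntntbl:cocont}$\Rightarrow$\Cref{item:pr:gexpntntbl:b}$\Rightarrow$\Cref{item:pr:gexpntntbl:a} is a cycle, and adjoining the equivalences of the previous paragraph pins all eight conditions together.

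The one real obstacle is exactly \Cref{item:pr:gexpntntbl:b}$\Rightarrow$\Cref{item:pr:gexpntntbl:a} (equivalently \Cref{item:pr:gexpntntbl:d}$\Rightarrow$\Cref{item:pr:gexpntntbl:a}): manufacturing an actual right adjoint---an exponential $[\bG,-]$---out of bare colimit-preservation. Everything else is formal categorical nonsense (adjoint monads, comonadicity, the shape-by-shape monad principle of \Cref{res:coeqpres}) or the elementary interaction of products, coproducts and quotients in $\cat{Top}$. Whichever packaging one chooses, totality or Day--Kelly, that single step is where the topology---as opposed to category theory---has to do the work, and I would expect to spend essentially all of the effort there.
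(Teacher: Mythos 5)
Your proof is correct, and while it shares the paper's overall skeleton --- two clusters of conditions, one about $\bG\times-$ and one about $G$, bridged by the observation of \Cref{res:coeqpres}\Cref{item:res:coeqpres:notcocont} that $G$ preserves a given shape of colimit iff the monad $T=\bG\times-$ does --- you close the two hard loops differently. The paper takes \Cref{item:pr:gexpntntbl:a} $\iff$ \Cref{item:pr:gexpntntbl:d} wholesale from \cite[Theorem II-4.12]{ghklms_cont-lat}, and then works its way up the $G$-cluster the hard way: \Cref{item:pr:gexpntntbl:coeq} $\Rightarrow$ \Cref{item:pr:gexpntntbl:cocont} via coproduct preservation and \cite[Proposition 13.4]{ahs}, \Cref{item:pr:gexpntntbl:cocont} $\Rightarrow$ \Cref{item:pr:gexpntntbl:ladj} via the adjoint functor theorem with an explicit solution set (factoring $G(Y)\to X$ through $X^{\bG}$), and \Cref{item:pr:gexpntntbl:ladj} $\Rightarrow$ \Cref{item:pr:gexpntntbl:comon} via the dual of Beck's theorem. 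You instead prove \Cref{item:pr:gexpntntbl:d} $\Rightarrow$ \Cref{item:pr:gexpntntbl:b} elementarily (coequalizers in $\cat{Top}$ are exactly quotient maps, $\bG\times-$ automatically preserves coproducts, colimits decompose into coproducts and coequalizers) and jump straight from \Cref{item:pr:gexpntntbl:a} to \Cref{item:pr:gexpntntbl:comon} by the Eilenberg--Moore adjoint-monad correspondence $T\dashv[\bG,-]$, which makes the solution-set argument and the dual-PTT verification unnecessary. What your route buys is economy and a cleaner accounting of where topology actually enters (a single step, \Cref{item:pr:gexpntntbl:b} $\Rightarrow$ \Cref{item:pr:gexpntntbl:a}, for which either the dual Special Adjoint Functor Theorem / totality of $\cat{Top}$ or the Day--Kelly characterization suffices, the latter being exactly the paper's citation); what the paper's route buys is uniformity with the rest of the article, since it recycles the solution-set and Beck-theorem machinery already deployed in \Cref{th:monads} and \Cref{th:allcocompl}, and it exhibits \Cref{item:pr:gexpntntbl:ladj} $\Rightarrow$ \Cref{item:pr:gexpntntbl:comon} as a standalone formal fact not routed through exponentiability. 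Both arguments are complete; yours would be a legitimate drop-in replacement.
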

\begin{proof}
  The downward implications from \Cref{item:pr:gexpntntbl:a} to \Cref{item:pr:gexpntntbl:d} are obvious (for arbitrary spaces; the group structure is irrelevant), and \cite[Theorem II-4.12]{ghklms_cont-lat} proves \Cref{item:pr:gexpntntbl:a} $\iff$ \Cref{item:pr:gexpntntbl:d} (for $T_0$ spaces, but that assumption is not crucial). The first four conditions are thus equivalent. 
  
  We also plainly have
  \begin{equation}\label{eq:comoniffcoeq}
    \Cref{item:pr:gexpntntbl:comon}
    \xRightarrow{\quad}
    \Cref{item:pr:gexpntntbl:ladj}
    \xRightarrow{\quad}
    \Cref{item:pr:gexpntntbl:cocont}
    \xRightarrow{\quad}
    \Cref{item:pr:gexpntntbl:coeq}.
  \end{equation}
  The first implication reverses by any number of comonadicity, dually to \Cref{th:monads}, because we already know that $G$ is continuous (so preserves all equalizers). Because $G$ preserves coproducts, it is cocontinuous precisely when it preserves coequalizers \cite[Proposition 13.4]{ahs}, so the third implication in \Cref{eq:comoniffcoeq} also backtracks. As for the converse to the {\it second} implication, it is a consequence of the adjoint functor theorem \cite[Theorem 18.12]{ahs} provided we verify the solution-set condition. To that end, note that every map $G(Y)\xrightarrow{f}X$ in $\cat{Top}$ factors (just plain set-theoretically) as
  \begin{equation*}
    \begin{tikzpicture}[>=stealth,auto,baseline=(current  bounding  box.center)]
      \path[anchor=base] 
      (0,0) node (l) {$G(Y)$}
      +(2,.5) node (u) {$X^{\bG}$}
      +(4,0) node (r) {$X$}
      ;
      \draw[->] (l) to[bend left=6] node[pos=.5,auto] {$\scriptstyle $} (u);
      \draw[->] (u) to[bend left=6] node[pos=.5,auto] {$\scriptstyle \text{projection at }1\in \bG$} (r);
      \draw[->] (l) to[bend right=6] node[pos=.5,auto,swap] {$\scriptstyle f$} (r);
    \end{tikzpicture}
  \end{equation*}
  through the $\bG$-equivariant upper left-hand map
  \begin{equation*}
    G(Y)\ni y
    \xmapsto{\quad}
    \big(\bG\ni s\xmapsto{\quad} f(s\triangleright y)\in X\big)
    \in X^{\bG}
  \end{equation*}
  (with $X^\bG$ acted upon by $\bG$ via $s\triangleright \varphi:=\varphi(- \cdot s)$). $f$ will thus factor through $G(\pi)$ for some quotient $G\xrightarrow{\pi}\overline{G}$ in $\tensor*[_\iota]{\cat{Top}}{^{\bG}}$ of cardinality $|\overline{G}|\le |X^{\bG}|$ (so bounded independently of $G$).
  
  We now have
  \begin{equation*}
    \Cref{item:pr:gexpntntbl:d}
    \xLeftrightarrow{\ }
    \Cref{item:pr:gexpntntbl:c}
    \xLeftrightarrow{\ }
    \Cref{item:pr:gexpntntbl:b}
    \xLeftrightarrow{\ }
    \Cref{item:pr:gexpntntbl:a}    
    \xLeftrightarrow{\text{\Cref{res:coeqpres}\Cref{item:res:coeqpres:notcocont}}}
    \Cref{item:pr:gexpntntbl:coeq}
    \xLeftrightarrow{\ }
    \Cref{item:pr:gexpntntbl:cocont}
    \xLeftrightarrow{\ }
    \Cref{item:pr:gexpntntbl:ladj}
    \xLeftrightarrow{\ }
    \Cref{item:pr:gexpntntbl:comon},
  \end{equation*}
  and we are done.  
\end{proof}

\begin{remark}\label{re:hownotpresquot}
  It might also be instructive to adapt \cite[Example 1.5.11 and \S 3.4.4]{dvr_ttg} (there specific to $\bG=\bQ$) to see directly how coequalizer preservation fails whenever $\bG\times -$ fails to preserve a quotient $\overline{X}:=X/R$ in $\cat{Top}$ by an equivalence relation $R\subseteq X^2$: the implication \Cref{item:pr:gexpntntbl:coeq} $\Rightarrow$ \Cref{item:pr:gexpntntbl:d} of \Cref{th:allcocompl}, in other words. 
    
  Failure of quotient preservation means that the quotient topology  $\bG\times^{q} \overline{X}$ is strictly finer than the usual product topology $\bG\times^{\pi} \overline{X}$. It then follows that the translation action
  \begin{equation*}
    \bG\times^{\pi} \left(\bG\times^q \overline{X}\right)
    \xrightarrow{\quad}
    \bG\times^q \overline{X}
  \end{equation*}
  cannot be continuous. The general phenomenon driving this remark is recorded in \Cref{le:oplaxact} below.
\end{remark}

The technical principle noted in \Cref{le:oplaxact} below is certainly a simple one, but worth isolating: it has already surreptitiously come in handy (at least) twice.

Recall \cite[Definition 1.2.10]{leinst_hc_2004} that a {\it colax} (sometimes {\it oplax} \cite[p.271]{zbMATH01903518}) monoidal functor $\cD\xrightarrow{F}\cC$ between monoidal categories $(\cD,\otimes,1)$ and $(\cC,\otimes,1)$ is one equipped with morphisms
\begin{equation*}
  F(\bullet\otimes -)
  \xrightarrow{\quad\phi_{\bullet,-}\quad}
  F\bullet\otimes F-
  \quad\text{and}\quad
  1\xrightarrow{\quad\phi\quad}F1,
\end{equation*}
natural and compatible with the associativity and unitality constraints in the guessable sense. 

\begin{lemma}\label{le:oplaxact}
  Let $\cD\xrightarrow{F}\cC$ be an op-lax monoidal functor,
  \begin{equation*}
    \bM\otimes \bM
    \xrightarrow{\quad\mu\quad}
    \bM
    ,\quad
    1
    \xrightarrow{\quad\eta\quad}
    \bM
    \quad
    \text{in}
    \quad
    \cD
  \end{equation*}
  an internal monoid and $X\in \cD$ an object.

  If there is a factorization 
  \begin{equation}\label{eq:le:oplaxact:fact}
    \begin{tikzpicture}[>=stealth,auto,baseline=(current  bounding  box.center)]
      \path[anchor=base] 
      (0,0) node (l) {$F(\bM\otimes \bM\otimes X)$}
      +(4,.5) node (u) {$F(\bM)\otimes F(\bM\otimes X)$}
      +(7,0) node (r) {$F(\bM\otimes X)$}
      ;
      \draw[->] (l) to[bend left=6] node[pos=.5,auto] {$\scriptstyle \phi_{\bM,\bM\otimes X}$} (u);
      \draw[->] (u) to[bend left=6] node[pos=.5,auto] {$\scriptstyle $} (r);
      \draw[->] (l) to[bend right=6] node[pos=.5,auto,swap] {$\scriptstyle F(\mu\otimes \id_X)$} (r);
    \end{tikzpicture}
  \end{equation}
  then $F(\bM\otimes X)\xrightarrow{\phi_{\bM,X}}F\bM\otimes FX$ has a left inverse. In particular, if $\phi_{\bM,X}$ is epic then it is an isomorphism. 
\end{lemma}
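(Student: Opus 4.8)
The plan is to exhibit an explicit left inverse for $\phi_{\bM,X}$, assembled from the diagonal filler furnished by the hypothesis together with the unit $\eta$ of $\bM$, and then to deduce the final clause from the standard fact that a split monomorphism which is also epic is invertible. First I would name the filler: by \Cref{eq:le:oplaxact:fact} there is a morphism $\psi\colon F\bM\otimes F(\bM\otimes X)\to F(\bM\otimes X)$ with $\psi\circ\phi_{\bM,\bM\otimes X}=F(\mu\otimes\id_X)$. Heuristically $\psi$ is ``left multiplication by $F\bM$ on the first tensor factor'', and the retraction should reverse $\phi_{\bM,X}$ by feeding the unit into the factor adjacent to $X$. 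Concretely I set
\[
  g:=\psi\circ\bigl(\id_{F\bM}\otimes F(\eta\otimes\id_X)\bigr)\colon F\bM\otimes FX\longrightarrow F(\bM\otimes X),
\]
where $F(\eta\otimes\id_X)\colon FX\cong F(1\otimes X)\to F(\bM\otimes X)$ absorbs the left unitor (suppressed throughout).

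The verification that $g\circ\phi_{\bM,X}=\id$ proceeds in two moves. Naturality of the colax structure map $\phi$ in its second variable, applied to $\eta\otimes\id_X\colon X\to\bM\otimes X$, yields
\[
  \bigl(\id_{F\bM}\otimes F(\eta\otimes\id_X)\bigr)\circ\phi_{\bM,X}
  =\phi_{\bM,\bM\otimes X}\circ F(\id_{\bM}\otimes\eta\otimes\id_X),
\]
so that $g\circ\phi_{\bM,X}=\psi\circ\phi_{\bM,\bM\otimes X}\circ F(\id_{\bM}\otimes\eta\otimes\id_X)$. Substituting the hypothesis and using functoriality rewrites this as $F\bigl((\mu\otimes\id_X)\circ(\id_{\bM}\otimes\eta\otimes\id_X)\bigr)$; the bracketed composite is $\bigl(\mu\circ(\id_{\bM}\otimes\eta)\bigr)\otimes\id_X$, which the right unit law for $\bM$ collapses to $\id_{\bM\otimes X}$. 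Hence $g\circ\phi_{\bM,X}=\id_{F(\bM\otimes X)}$, exhibiting $g$ as the desired left inverse.

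For the final sentence I would argue that an epic $\phi_{\bM,X}$ admitting a left inverse $g$ is invertible: from $g\circ\phi_{\bM,X}=\id$ one gets $\phi_{\bM,X}\circ g\circ\phi_{\bM,X}=\phi_{\bM,X}$, and cancelling the epimorphism $\phi_{\bM,X}$ on the right produces $\phi_{\bM,X}\circ g=\id$ as well, so that $g$ is a two-sided inverse.

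I expect the only genuine friction to be bookkeeping: orienting the naturality squares for $\phi$ consistently in the colax direction $F(A\otimes B)\to FA\otimes FB$, and discharging the suppressed associativity and unit coherence isomorphisms so that ``insert $\eta$ beside $X$, then multiply'' really does reduce to the monoid's right unit axiom. The one non-mechanical decision is the choice to insert the unit into the $X$-adjacent slot, which is precisely what turns the hypothesized splitting $\psi$ into a retraction of $\phi_{\bM,X}$ rather than of some other structure map.
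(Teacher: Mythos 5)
Your proof is correct and is essentially the paper's own argument: the same left inverse $\psi\circ\bigl(\id_{F\bM}\otimes F(\eta\otimes\id_X)\bigr)$, verified via the same naturality square for $\phi_{\bM,-}$ at $\eta\otimes\id_X$ and the right unit law, with the final clause handled by the standard ``split mono $+$ epi $\Rightarrow$ iso'' cancellation (which the paper simply cites). The only difference is presentational: the paper assembles these identities into a single commutative diagram rather than a chain of equations.
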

\begin{proof}
  Left-invertible epimorphisms are isomorphisms by (the dual to) \cite[Proposition 7.36]{ahs}, hence the second claim given the first. For the latter, fit a factorization \Cref{eq:le:oplaxact:fact} into the commutative
  \begin{equation*}
    \begin{tikzpicture}[>=stealth,auto,baseline=(current  bounding  box.center)]
      \path[anchor=base] 
      (0,0) node (l) {$F(\bM\otimes \bM\otimes X)$}
      +(4,.5) node (u) {$F(\bM)\otimes F(\bM\otimes X)$}
      +(7,0) node (r) {$F(\bM\otimes X)$:}
      +(-4,.5) node (ul) {$F(\bM\otimes X)$}
      +(1,1.5) node (ur) {$F\bM\otimes FX$}
      ;
      \draw[->] (l) to[bend left=6] node[pos=.5,auto] {$\scriptstyle \phi_{\bM,\bM\otimes X}$} (u);
      \draw[->] (u) to[bend left=6] node[pos=.5,auto] {$\scriptstyle $} (r);
      \draw[->] (l) to[bend right=6] node[pos=.5,auto,swap] {$\scriptstyle F(\mu\otimes \id_X)$} (r);
      \draw[->] (ul) to[bend right=6] node[pos=.5,auto,swap] {$\scriptstyle F(\id_{\bM}\otimes\eta\otimes \id_X)$} (l);
      \draw[->] (ul) to[bend left=6] node[pos=.5,auto] {$\scriptstyle \phi_{\bM,X}$} (ur);
      \draw[->] (ur) to[bend left=6] node[pos=.5,auto] {$\scriptstyle \id_{F\bM}\otimes F(\eta\otimes\id_X)$} (u);
      \draw[->] (ul) .. controls (-6,-1.5) and (6,-.8) .. node[pos=.5,auto,swap] {$\scriptstyle \id$} (r);
    \end{tikzpicture}
  \end{equation*}
  the triangle commutes by assumption and the upper left-hand square by the naturality of $\phi_{\bullet,-}$. We have the requisite left inverse to $\phi_{\bM,X}$ in the composition of the two upper right-hand maps. 
\end{proof}

\begin{examples}\label{exs:le:oplaxact}
  The two occasions for applying \Cref{le:oplaxact} alluded to above are as follows.
  \begin{enumerate}[(1),wide]
  \item\label{item:exs:le:oplaxact:quot} Take $\cC=\cat{Top}$ with its usual Cartesian monoidal structure (or any number of satellite variations; $\cat{Top}_{T_2}$, etc.). $\cD$ is the category of equivalence relations
    \begin{equation*}
      (X,R)
      ,\quad
      R\subseteq X\times X
      ,\quad
      X\in \cC,
    \end{equation*}
    again with the Cartesian structure. The functor $\cD\xrightarrow{F}\cC$ is
    \begin{equation*}
      \cD\ni (X,R)
      \xmapsto{\quad F\quad}
      \overline{X}:=X/R\in \cC.
    \end{equation*}
    The colax structure derives from the familiar observation that quotients of products have at least as fine a topology as the corresponding products of quotients. The canonical $\phi_{\bullet,-}$ are also plainly epic, being set-theoretic bijections.
    
    Identify $\bM\in \cat{Mon}(\cC)$ with its diagonal equivalence relation, so that it becomes a monoid in $\cD$ as well. \Cref{le:oplaxact} then applies, and says that {\it whenever} $\bM\times -$ fails to preserve a quotient $\overline{X}:=X/R$ the left-hand translation $\bM$-action on the quotient space $\bM\times^{q} \overline{X}$ with the quotient topology fails to be continuous for the {\it Cartesian} topology on $\bM\times(\bM\times^q \overline{X})$. 

  \item\label{item:exs:le:oplaxact:k} Take for $F$ the full embedding
    \begin{equation*}
      \cD
      :=
      \cat{Top}_{T_2,\kappa}
      \lhook\joinrel\xrightarrow[]{\quad F\quad}
      \cat{Top}_{T_2}
      =:\cC
    \end{equation*}
    of the category of Hausdorff {\it compactly generated} spaces, i.e. \cite[Definition 43.8]{wil_top} those $X\in\cat{Top}_{T_2}$ whose open sets are precisely those whose intersection with every compact subspace is open (equivalently: $X$ carries the {\it final topology} \cite[\S I.2.4, Proposition 6]{bourb_top_en_1} induced by the inclusions of its compact subspaces).
    
    $\cat{Top}_{T_2,\kappa}$ is coreflective in $\cat{Top}_{T_2}$ \cite[\S VII.8, Proposition 2]{mcl_2e}, so in particular (co)complete. It is also Cartesian closed \cite[\S VII.8, Theorem 3]{mcl_2e} for its product $\times^{\kappa}$ (henceforth the {\it $\kappa$-product})  obtained by composing the usual Cartesian product with the coreflection $\cat{Top}_{T_2}\to \cat{Top}_{T_2,\kappa}$; this gives the colaxity
    \begin{equation*}
      (\bullet)\times^{\kappa} (-)
      \xrightarrow{\quad\phi_{\bullet,-}}
      (\bullet)\times (-)
    \end{equation*}
    required by \Cref{le:oplaxact}, again epic because bijective. Per that result, we will have $\bM\in \cat{Mon}(\cat{Top}_{T_2,\kappa})$ failing to operate plain-$\times$-continuously on $\bM\times^{\kappa}X$ whenever the latter carries a strictly finer topology than $\bM\times X$. This phenomenon is what drove the proof of \Cref{pr:krprodpres}.
  \end{enumerate}
\end{examples}

There are also functors linking the categories $\tensor[_\cF]{\cC}{^{\bM}}$ of \Cref{th:monads} for fixed $\cC$ and $\bM$ varying $\cF$: one can strengthen the constraint $\cF$ to $\cF'$ (notation: $\cF\preceq \cF'$) in the sense of making it more demanding. Examples include
\begin{itemize}[wide]
\item enlarging $\cS$ and/or $\cJ$;

\item enlarging individual sets belonging to $\cS$ and/or $\cJ$;

\item strengthening the quasi-boundedness of \Cref{exs:flowconstraints}\Cref{item:exs:flowconstraints:unif} to the boundedness of \Cref{exs:flowconstraints}\Cref{item:exs:flowconstraints:eunif};

\item and in turn strengthening the latter to joint uniform continuity for actions $\bM\times X\to X$ as in \Cref{exs:flowconstraints}\Cref{item:exs:flowconstraints:internalunif}, upon equipping $\bM$ with any of its standard uniformities (left, right, bilateral). 
\end{itemize}

Any such relation $\cF\preceq \cF'$ produces a full inclusion functor $\tensor[_{\cF'}]{\cC}{^{\bM}}\subseteq \tensor[_{\cF}]{\cC}{^{\bM}}$. It is at this point not surprising, perhaps, that those inclusions reflect (i.e. admit left adjoints).

\begin{theorem}\label{th:strengthreflect}
  For all listed instances of constraint strengthening $\cF\preceq \cF'$ in the context of \Cref{th:monads} the resulting inclusion $\tensor[_{\cF'}]{\cC}{^{\bM}}\subseteq \tensor[_{\cF}]{\cC}{^{\bM}}$ is reflective. 
\end{theorem}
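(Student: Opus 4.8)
The plan is to apply Freyd's Adjoint Functor Theorem \cite[Theorem 18.12]{ahs} to the full inclusion $\iota:\tensor[_{\cF'}]{\cC}{^{\bM}}\hookrightarrow \tensor[_{\cF}]{\cC}{^{\bM}}$, in close parallel with step (I) of the proof of \Cref{th:monads}. Both categories are complete (this is precisely the completeness established there, applied to $\cF$ and to the stronger $\cF'$ separately), so it remains only to check that $\iota$ is continuous and that the solution-set condition holds; the reflector is then the resulting left adjoint. Note that continuity of $\iota$ is in any case forced: a reflective inclusion is a right adjoint, hence preserves limits, so $\tensor[_{\cF'}]{\cC}{^{\bM}}$ had better be closed under limits in $\tensor[_{\cF}]{\cC}{^{\bM}}$.

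For that continuity, the key observation is that in both categories limits are computed by the same set-theoretic recipe used in \Cref{th:monads} — subspaces and product topologies/uniformities, with the $\bM$-action coming along — so $\iota$ preserves them exactly when the constraint $\cF'$ survives passage to products and equalizers. I would verify this over the listed strengthenings. For the continuity-type conditions (enlarging $\cS$ or $\cJ$, or enlarging their members), separate or joint (uniform) continuity of an action over a fixed $S\times X$ or $J\times X$ is tested componentwise into a product and restricts along a subspace inclusion, hence is inherited by products and equalizers. For boundedness and quasi-boundedness the verification is the elementary entourage computation: a basic entourage of a product uniformity constrains only finitely many coordinates, so an identity neighborhood witnessing \Cref{eq:eunif} (respectively its quasi-bounded analogue) for the product is assembled as the finite intersection of neighborhoods witnessing it in the relevant factors, and restriction to an $\bM$-invariant subspace is immediate.

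The solution-set condition is then handled verbatim as in \Cref{th:monads}. Given $X\in \tensor[_{\cF}]{\cC}{^{\bM}}$, any morphism $X\xrightarrow{f}\iota(Y)$ with $Y$ satisfying $\cF'$ factors through the $\bM$-invariant image $f(X)\subseteq Y$, or through its closure $\overline{f(X)}$ in the completion/compactification cases $\cC=\cat{Unif}_{T_2,c},\cat{Cpct}_{T_2}$ (closed, hence complete/compact). Because $\cF'$ is closed under (closed) subspaces — the very fact just used for continuity of $\iota$ — this image is an object of $\tensor[_{\cF'}]{\cC}{^{\bM}}$, and $f$ factors as $X\to \overline{f(X)}\hookrightarrow Y$ with the inclusion a morphism of $\cF'$-flows. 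Its carrier has cardinality bounded by a cardinal depending only on $X$ and $\bM$ (at most $|\bM|\cdot|X|$ in the invariant-subspace cases, at most $\exp\exp(|\bM|\cdot|X|)$ in the completion cases via the density bound of \cite[\S 2.4]{juh_card}), so only a set of such images arises up to isomorphism. This supplies the required solution set and finishes the appeal to the Adjoint Functor Theorem; as a sanity check, the quasi-bounded-to-bounded instance recovers the construction $(X,\cU)\mapsto(X,\cU^{\bG})$ of \cite[Lemma 3.8]{megr_max-equiv-cpct}.

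The main obstacle is the middle step: confirming that each flavor of constraint is genuinely closed under these set-theoretic limits. For the topological and uniform continuity conditions this is routine, but boundedness and quasi-boundedness require the honest (if elementary) bookkeeping with product-uniformity entourages and finite intersections of identity neighborhoods indicated above. Since that is precisely the closure-under-(closed-)subspaces input reused in the solution-set argument, the two checks are really a single one, and no genuinely new difficulty beyond \Cref{th:monads} is encountered.
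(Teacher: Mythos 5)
Your proof is correct and follows essentially the same route as the paper's: Freyd's Adjoint Functor Theorem applied to the full inclusion $\iota$, with completeness of both categories and the solution-set condition handled exactly as in \Cref{th:monads}. The only divergence is in the continuity of $\iota$: where you re-verify by hand that each listed constraint $\cF'$ survives products and (closed) subspaces, the paper obtains this for free by noting that $\iota$ fits into a commutative triangle over $\cC$ whose two legs are both monadic (hence both create limits lying over the same limits in $\cC$), so your entourage bookkeeping, while accurate, can be bypassed.
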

\begin{proof}
  By \Cref{th:monads} the categories are complete and the inclusion is continuous because it fits into a commutative triangle
  \begin{equation*}
    \begin{tikzpicture}[>=stealth,auto,baseline=(current  bounding  box.center)]
      \path[anchor=base] 
      (0,0) node (l) {$\tensor[_{\cF'}]{\cC}{^{\bM}}$}
      +(2,-.5) node (d) {$\cC$}
      +(4,0) node (r) {$\tensor[_{\cF}]{\cC}{^{\bM}}$.}
      ;
      \draw[right hook->] (l) to[bend left=6] node[pos=.5,auto] {$\scriptstyle \iota$} (r);
      \draw[->] (r) to[bend left=6] node[pos=.5,auto] {$\scriptstyle \text{monadic}$} (d);
      \draw[->] (l) to[bend right=6] node[pos=.5,auto,swap] {$\scriptstyle \text{monadic}$} (d);
    \end{tikzpicture}
  \end{equation*}
  The conclusion now follows from the adjoint functor theorem \cite[Theorem 18.12]{ahs} after again observing that the solution-set condition is satisfied in all cases: a morphism $X\to \iota Y$ in the larger category factors through $\iota Y'$ with the cardinality of $Y'$ bounded uniformly in terms of only $X$ and the fixed data $\cC$, $\bM$, $\cF$ and $\cF'$. 
\end{proof}

\begin{remark}\label{re:qb2brefl}
  The particular case
  \begin{equation*}
    \text{
      quasi-boundedness
      (\Cref{exs:flowconstraints}\Cref{item:exs:flowconstraints:unif})
    }\ 
    =:
    \ 
    \cF\preceq\cF'
    \ 
    :=
    \ 
    \text{
      boundedness
      (\Cref{exs:flowconstraints}\Cref{item:exs:flowconstraints:eunif})
    }
  \end{equation*}
  of \Cref{th:strengthreflect} is the construction $(X,\cU)\mapsto (X,\cU^{\bG})$ of \cite[Lemma 3.8]{megr_max-equiv-cpct}, attaching a uniform space carrying a bounded $\bG$-action to one carrying only a quasi-bounded one.

  \cite[Lemma 3.8]{megr_max-equiv-cpct} does not phrase the construction in terms of universality, but the check that that universality does obtain is simple enough: the entourages of the original uniformity $\cU$ are enlarged by fiat into those of $\cU^{\bG}$ so as to render the original action bounded (hence a uniformly continuous map $(X,\cU)\to (X,\cU^{\bG})$), and the enlargement is plainly optimal subject to this boundedness constraint.
\end{remark}




\addcontentsline{toc}{section}{References}

\Addresses

\end{document}